\newtheorem{theorem}{Theorem}[section]
\newtheorem{lemma}[theorem]{Lemma}
\theoremstyle{definition}
\newtheorem{definition}[theorem]{Definition}
\newtheorem{remark}[theorem]{Remark}
  \newtheorem{claim}{Claim}
\newenvironment{claimproof}[1][Proof of Claim]{\noindent \underline{#1.} }{\hfill$\diamondsuit$}
\newcommand{\bbC}{\mathbb{C}}
\newcommand{\bbN}{\mathbb{N}}
\newcommand{\bbR}{\mathbb{R}}
\newcommand{\bbZ}{\mathbb{Z}}
\newcommand{\scrF}{{\mathscr{F}}}
\newcommand{\SL}{\mathrm{SL}}
\newcommand{\SO}{\mathrm{SO}}
\DeclareMathOperator{\Leb}{Leb}
\newcommand{\great}{gap-rich}
\newcommand{\pregreat}{non-ahyperbolic}
\newcommand{\ample}{gap-rich}
\newcommand{\energy}{E}
\DeclareMathOperator{\tr}{trace}
\DeclareMathOperator{\lcm}{lcm}
\DeclareMathOperator{\per}{Per}
\DeclareMathOperator{\LP}{LP}
\DeclareMathOperator{\spectrum}{spec}
\DeclareMathOperator{\resolvent}{resolv}
\newcommand{\idty}{\mathbb{1}}
\newcommand{\lyap}{{\rm Lyap}}
\DeclareMathOperator{\spr}{spr}
\newcommand{\ids}{{\rm IDS}}
\DeclareMathOperator{\imaginary}{Im}
\renewcommand{\Im}{\imaginary}
\DeclareMathOperator{\real}{Re}
\renewcommand{\Re}{\real}
\DeclareMathOperator{\cyclic}{cyc}
\newcommand{\concatenate}{\sharp}
\newcommand{\aggregate}{\natural}
\newcommand{\Hausdorff}{{\rm H}}
\newcommand{\liesl}{\operatorname{sl}}
\DeclareMathOperator{\interior}{int}
\newcommand{\set}[1]{\left\{ #1 \right\}}
\numberwithin{equation}{section}
\title[Thin Spectra for Word Models]{Thin Spectra for Periodic and Ergodic Word Models}
\author[J.\ Fillman]{Jake Fillman}
\email{\href{mailto:fillman@tamu.edu}{fillman@tamu.edu}}
\address{Department of Mathematics, Texas A\&M University, College Station, TX  77843-3368}
\author[M. N. Gradner]{Michala N. Gradner}
\address{Department of Mathematics, Texas State University, San Marcos, TX  78666}
\author[H. J. Hendricks]{Hannah J. Hendricks}
\address{Department of Mathematics, Texas State University, San Marcos, TX  78666}
\date{}
\begin{document}

\begin{abstract}
    We establish a new and simple criterion that suffices to generate many spectral gaps for periodic word models.
    This leads to new examples of ergodic Schr\"odinger operators with Cantor spectra having zero Hausdorff dimension that simultaneously may have arbitrarily small supremum norm together with arbitrarily long runs on which the potential vanishes.
\end{abstract}

\maketitle

\hypersetup{linkcolor={black!30!blue}, citecolor={black!30!green},urlcolor={black!30!blue}}

\section{Introduction}

This paper is about linear operators with limit-periodic coefficients, which have been studied extensively over the years; we point the reader to \cite{DF2020LP} and \cite[Chapter~8]{DF2024ESO2} for more background and history. 
A function is called \emph{limit-periodic} if it lies in the  closure of the set of periodic functions in the uniform  topology. 
Avila proved that generic discrete one-dimensional Schr\"odinger operators with limit-periodic potentials have  purely singular continuous spectrum supported on a Cantor set of zero Hausdorff dimension \cite{Avila2009CMP}. 
Afterwards, this phenomenon and the mechanisms leading to it have been shown to be robust, occurring in many other scenarios such as quantum walks, continuum Schr\"odinger operators, Dirac operators, and  certain graph Laplacians.

The main advance of the recent work \cite{EFGL2022JFA} is to produce mechanisms that lead to thin spectrum in terms of simple group-theoretic criteria, such as the noncommutation of suitable monodromy matrices.
These methods 
work in quite a high level of generality.
However, the approach of \cite{EFGL2022JFA} can only produce conclusions of the following type: given some ``piece'' of potential, there is another nearby ``piece'' yielding the desired noncommutation.
 Crucially, one needs ``non-commutation at every scale'' and the relevant statements are (generally) supplied by inverse spectral theory.
Thus, as soon as one passes to a restricted set of possible potential values or to a particular rule dictating local correlations in a potential, the set of allowable pieces lies in a proper subset of possible realizations, and the approach no longer yields fruitful conclusions.
The present work introduces a brand-new criterion that is even simpler than that of \cite{EFGL2022JFA} and which furthermore produces new results in situations in which the approach of \cite{EFGL2022JFA} is not applicable or for which it is overly complicated to implement.

The present work circumvents the need to explicitly verify noncommutation at all scales.
At a high level, the main criterion says that as long as (away from a discrete exceptional set of energies), the periodic monodromy matrices (as a function of a suitable parameter) are not \emph{always} nonhyperbolic, then, upon passing to higher scales, one can \emph{perturbatively} open spectral gaps wherever one pleases away from the exceptional set.
In particular, we emphasize that the criterion only needs to be checked at a single scale.
The new criterion is strong enough that it suffices to establish the presence of thin Cantor spectrum, and we show that it applies in previous cases of interest as well as new ones.
We give more precise statements below.

As a sample application of our work, we include the following striking result: for generic limit-periodic potentials, the potential obtained by inserting a string of $(k-1)$ zeros between each entry of the potential has spectrum of vanishing Hausdorff dimension for all choices of $k$ and any choice of coupling constant.
More precisely, let $\LP(\bbZ):= \LP(\bbZ,\bbR)$ denote the set of limit-periodic functions $\bbZ \to \bbR$. For any $V:\bbZ \to \bbR$ and $k \in \bbN$, let us denote by $V^{[k]}$ the $k$-fold \emph{sieve} of $V$ obtained by inserting $k-1$ consecutive zeros between each entry of $V$, that is:
\begin{equation} \label{eq:main:sieve}
    V^{[k]}(n)
    = \begin{cases}
        V(n/k) & n \in k\bbZ, \\
        0 & \text{otherwise.}
    \end{cases}
\end{equation}
For $W:\bbZ \to \bbR$ bounded, we denote by $H_W = \Delta+W:\ell^2(\bbZ) \to \ell^2(\bbZ)$ the associated \emph{discrete Schr\"odinger operator}, which acts on $\psi \in \ell^2(\bbZ)$ via:
\begin{equation}
    [H_W\psi](n)
    = \psi(n-1) + \psi(n+1) + W(n)\psi(n).
\end{equation}

\begin{theorem} \label{t:main:sieve}
For generic limit-periodic $V:\bbZ \to \bbR$, $\spectrum(H_{\lambda V^{[k]}})$ is a Cantor set of zero Hausdorff dimension  for all $\lambda \neq  0$ and all $k \in \bbN$.
\end{theorem}

\begin{remark}
\mbox{\,}
\begin{enumerate}
\item[(a)]    The result is striking precisely because uniform approximation by periodic potentials, the insertion of long strings of zeros, and taking $\lambda>0$ small put the operator ``close'' to the free operator both in a uniform (that is $\ell^\infty$) sense, but also in a strong local sense: the density of values at which the potential differs from zero (i.e., the potential energy of the free operator) goes to zero as $k \to \infty$. 
All aspects conspire together to make the spectrum thicker, but their collective effort nevertheless fails.

\item[(b)]    To the best of the authors' knowledge, this gives the first examples of almost-periodic potentials $V \in \ell^\infty(\bbZ)$ with the property that $\spectrum(H_V)$ has Cantor spectrum of zero Hausdorff dimension which is persistent after both the insertion of strings of zeros of arbitrarily high density in $\bbZ$ and the introduction of a coupling constant tending to zero.

\item[(c)] Notice that $\spectrum H_V$ having zero Hausdorff dimension implies that it has zero Lebesgue measure as well, and hence the spectral type is purely singular.
By Gordon-type arguments  \cite{Gordon1976}, $\Delta + \lambda V^{[k]}$ has purely continuous spectrum for generic $V \in \LP(\bbZ)$, all $\lambda \neq 0$, and all $k \in \bbN$.
Thus, by the Baire category theorem, one can upgrade the conclusion of Theorem~\ref{t:main:sieve} to purely singular continuous spectrum of zero Hausdorff dimension for generic $V$, all $\lambda \neq 0$, and all $k \in \bbN$.
Suitable Gordon criteria can be found, e.g., in \cite[Section~7.8]{DF2022ESO}. 
Additionally, arithmetic versions can be found in the literature; see \cite{AJ2009Annals, AYZ2017Duke, liu2025newproofsharpgordons}.

\item[(d)] One can fix $\bm b = (b_1,\ldots, b_{k-1})$ and insert this string between each entry of $V$, defining $V^{[k,\bm b]}(n) = V(n/k)$ for $n \in k \bbZ$ and $V(n) = b_{n \operatorname{mod} k}$ for $n$ not divisible by $k$ . 
Then, $H_{V^{[k,\bm b]}}$ has zero-dimensional spectrum for generic $V \in \LP$.
Compare Theorem~\ref{t:prerichSchroExampleSieving}.

\item[(e)] These results are inspired by and of a piece with work on \emph{random word models} (compare \cite{DamSimSto2004JFA, deBievreGerminet2000JSP, JitSimSto2003CMP, Rangamani2022AHP}) in the sense that there is an underlying mechanism (limit-periodicity in our case, randomness in the other case) that is so strong that the spectral output (Anderson localization in the random word case, thin Cantor spectrum in the present case) persists for word models that are generated using the relevant mechanism.
The most closely related model (indeed, one of the inspirations of this work) is the \emph{trimmed Anderson model} \cite{ElgartSodin2014JST, ElgartSodin2017JST}, in which one only inserts random variables at a sublattice of points in $\bbZ$ (more generally on a sublattice of a suitable lattice).

\item[(f)] It would be of interest to better understand the consequences of the sieving process for discrete Schr\"odinger operators, especially ergodic operators, since it can naturally be encoded via a product dynamical system in that setting. 
For instance, if $V$ is an aperiodic element of a Boshernitzan subshift over a finite alphabet $\mathcal{A} \subseteq \bbR$, then $\spectrum(H_{V^{[k]}})$ is a Cantor set of zero Lebesgue measure for all $k \in \bbN$, which follows from the results of \cite{DamFilGoh2022JST}.
In the setting of the Fibonacci subshift,  it can be seen that the dimension of the spectrum can be altered by sieving \cite{FillmanLuna}.
Related work in the setting of quasi-periodic operators includes the \emph{mosaic model}, see \cite{WangEtAl2023CMP, ZhouEtAl2023PRL} and references therein.

\item[(g)] As in \cite{DamFilGor2019AHP}, one can strengthen Hausdorff dimension zero to lower box-counting dimension zero on compact subsets of the complement of a discrete exceptional set of energies and hence can obtain spectral results for higher-dimensional operators on $\ell^2(\bbZ^d)$.
Here there is a small subtlety; the union of countably many sets of Hausdorff dimension zero has Hausdorff dimension zero, but the same conclusion cannot be said for the box-counting dimension.
Thus, by exhausting the complement of the exceptional set by compact sets, we can draw conclusions regarding the Hausdorff dimension but not the box-counting dimension of the whole spectrum.
\end{enumerate}
\end{remark}

One could also repeat each value of the potential $k$ times consecutively, which is implemented by
\begin{equation}
V^{\circledast k}(n) = V(\lfloor n/k \rfloor), \quad n \in \bbZ.
\end{equation}
Again our methods yield generic Cantor spectrum of zero Hausdorff dimension.

\begin{theorem} \label{t:main:repeat}
For generic limit-periodic $V:\bbZ \to \bbR$, $\spectrum(H_{\lambda V^{\circledast k}})$ is a Cantor set of zero Hausdorff dimension for all $\lambda \neq  0$ and all $k \in \bbN$.
\end{theorem}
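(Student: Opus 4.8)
The plan is to follow the same scheme as for Theorem~\ref{t:main:sieve}: exhibit $\lambda V^{\circledast k}$ as a uniform limit of periodic potentials, feed the resulting family into the abstract gap-opening machinery, and reduce the whole argument to one short transfer-matrix computation over a single repeated block. First I would record the structural facts. If $V=\lim_j V_j$ uniformly with $V_j$ of period $p_j$ and $p_j\mid p_{j+1}$, then $V^{\circledast k}=\lim_j V_j^{\circledast k}$ uniformly, $V_j^{\circledast k}$ has period $kp_j$, and $kp_j\mid kp_{j+1}$; moreover $V\mapsto V^{\circledast k}$ is a linear isometry of $\LP(\bbZ)$ onto the closed subspace of $k$-block-constant limit-periodic functions, and $\lambda V^{\circledast k}=(\lambda V)^{\circledast k}$. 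Hence for each fixed $\lambda\neq 0$ and $k\in\bbN$ the class $\{\lambda V^{\circledast k}:V\in\LP(\bbZ)\}$ is an isometric copy of $\LP(\bbZ)$, so ``generic $V$'' is the correct genericity notion. The outer quantifiers are handled exactly as in the proof of Theorem~\ref{t:main:sieve}: $k$ ranges over a countable set, so one intersects the residual sets obtained for each $k$; and the residual set in $V$ is uniform over $\lambda\neq 0$ because the gap-opening step below succeeds for \emph{every} $\lambda\neq 0$, so one may build it as a countable intersection of open dense sets encoding that $\spectrum(H_{\lambda W})\cap[-n,n]$ admits covers witnessing Hausdorff dimension $<1/n$ for all $\lambda$ with $1/n\le|\lambda|\le n$ (using Hausdorff continuity of $W\mapsto\spectrum(H_W)$ and compactness in $\lambda$).

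By the abstract criterion used for Theorem~\ref{t:prerichSchroExampleSieving} --- that a \pregreat\ class is \great, and that a \great\ class has generic zero-dimensional Cantor spectrum --- it now suffices to check that the repeating class is \pregreat. The one-step transfer matrix for $H_W$ at energy $E$ across a site carrying value $v$ is $T(v,E)=\left(\begin{smallmatrix}E-v&-1\\1&0\end{smallmatrix}\right)$, so the transfer across one repeated block of $\lambda V^{\circledast k}$ is $T(\lambda v,E)^k$, and an induction on $k$ using the Cayley--Hamilton relation $T(v,E)^2=(E-v)\,T(v,E)-\idty$ gives
\[
T(\lambda v,E)^k=\begin{pmatrix} p_k(E-\lambda v) & -p_{k-1}(E-\lambda v)\\ p_{k-1}(E-\lambda v) & -p_{k-2}(E-\lambda v)\end{pmatrix},
\]
where $p_{-1}=0$, $p_0=1$, and $p_j(x)=x\,p_{j-1}(x)-p_{j-2}(x)$ are the monic polynomials with $\deg p_j=j$ (the rescaled Chebyshev polynomials of the second kind).

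The single-scale check then reduces to the following assertion, which I expect to be the technical heart of the argument: for every $M_0\in\SL(2,\bbR)$, every $E\in\bbR$, and every $\lambda\neq 0$, there is $v\in\bbR$ with $M_0\,T(\lambda v,E)^k$ hyperbolic. Indeed, the displayed formula gives $\tr\!\big(M_0\,T(\lambda v,E)^k\big)=(M_0)_{11}\,p_k(E-\lambda v)+\big[(M_0)_{12}-(M_0)_{21}\big]\,p_{k-1}(E-\lambda v)-(M_0)_{22}\,p_{k-2}(E-\lambda v)$, a polynomial in $v$ whose coefficient of $v^k$ is $(M_0)_{11}(-\lambda)^k$ and whose coefficient of $v^{k-1}$ equals $\big[(M_0)_{12}-(M_0)_{21}\big](-\lambda)^{k-1}$ when $(M_0)_{11}=0$. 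Since $\det M_0=1$, the quantities $(M_0)_{11}$ and $(M_0)_{12}-(M_0)_{21}$ cannot both vanish: $(M_0)_{11}=0$ forces $(M_0)_{12}(M_0)_{21}=-1$, hence $(M_0)_{12}\neq(M_0)_{21}$ over $\bbR$. Thus, for $k\ge 2$ and every $E$ and every $\lambda\neq 0$, this trace is a non-constant polynomial in $v$ (of degree $k$ or $k-1$), so its modulus exceeds $2$ for some $v$. Taking $M_0$ to be the product of the remaining transfer matrices when one perturbs the value carried by a single repeated block of a higher periodic approximant then verifies that the repeating class is \pregreat. (For $k=1$ one has $V^{\circledast 1}=V$, and the conclusion is the classical statement that generic limit-periodic Schr\"odinger operators have zero-dimensional Cantor spectrum, itself an instance of the abstract theorem.)

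Feeding this into the abstract machinery yields Theorem~\ref{t:main:repeat}. The genuinely new content --- the identity for $T(\lambda v,E)^k$ together with the $\det=1$ observation --- is short, and the main obstacles are organizational rather than analytic: (i) packaging the repeating construction (the ambient class, the perturbation parameter, the concatenation of blocks) so that it fits the hypotheses of the \pregreat$\,\Rightarrow\,$\great\ step verbatim, and (ii) the bookkeeping that makes the residual set of $V$ independent of the coupling constant. A pleasant feature by contrast with the sieving model is that the perturbed block $T(\lambda v,E)^k$ never degenerates to $\pm\idty$ once $v$ is free, so no exceptional energies arise from the single-scale check itself.
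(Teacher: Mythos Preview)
Your overall scheme matches the paper's: identify $V^{\circledast k}$ with the word model over $Y_k=\{x^{\concatenate k}:x\in\bbR\}$, verify \pregreat\ for the associated transfer-matrix map, invoke Theorem~\ref{t:prerichImpliesRich} to get \great, and then Theorem~\ref{t:richToZHDcoupling} (using $\lambda Y_k=Y_k$ and intersecting over $k\in\bbN$) for the conclusion. So the logical skeleton is the same.

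Where you diverge is in the \pregreat\ check, and here you are doing far more work than the definition asks for. Recall that \pregreat\ only says: for each $E\notin S$ there exists \emph{some} $y\in Y$ with $A(y,E)$ hyperbolic. There is no ambient $M_0$, no ``remaining transfer matrices,'' and no higher periodic approximant in that statement---those belong to the \great\ condition, and the whole point of Theorem~\ref{t:prerichImpliesRich} is that you never have to touch them directly. For $Y_k$ the check is therefore: given $E$, find $x$ with $T(x,E)^k$ hyperbolic. Since $T(x,E)^k$ is hyperbolic whenever $T(x,E)$ is (pick $|E-x|>2$), this is one line; that is exactly the paper's proof of Theorem~\ref{t:prerichSchroExamplePolymer}. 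Your Chebyshev identity and the $\det M_0=1$ trick are correct and do imply what is needed (set $M_0=\idty$), but the sentence ``taking $M_0$ to be the product of the remaining transfer matrices \ldots\ verifies that the repeating class is \pregreat'' indicates you have conflated \pregreat\ with something closer to \great. What you actually prove is a hybrid statement---stronger than \pregreat\ but not \great\ (your $v$ need not be close to the original value)---which happens to dominate the required one. No harm done, but the paper's route replaces your polynomial analysis by a single observation about powers of hyperbolic matrices.
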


To illustrate the breadth of the approach, we complement the results for discrete Schr\"odinger operators with similar results for continuum Schr\"odinger operators, which follow as a consequence from the same framework.
To that end, recall that a continuum Schr\"odinger operator in $L^2(\bbR)$ with bounded potential $V:\bbR \to \bbR$ is given by
\begin{equation}
L_V\psi = -\psi''  + V \psi
\end{equation}
on a suitable domain of self-adjointness.
In order to define the relevant examples, we will need some more notation, so we postpone the statements of those theorems for a moment; see Section~\ref{ssec:continuumops}.

The rest of the paper is laid out as follows.
We introduce the framework in which we work and the main abstract criteria in Section~\ref{sec:results}.
There are two criteria formulated at the level of suitable one-parameter maps of a complete metric space into $\SL(2,\bbR)$: a map is \great\ if one is able to perturb any chain of iterates and produce a hyperbolic matrix (away from a discrete exceptional set of parameters), while it is \pregreat\ if for each parameter the image contains at least one hyperbolic matrix (away from a discrete exceptional set of parameters).
For operators, we say that certain sets are \ample\ if any chain of them can be perturbed in order to open a spectral gap at a specified energy (again away from an exceptional set of energies).
We prove the main abstract result in Section~\ref{sec:oneparam} and use it to verify that several examples of interest are \ample.
Section~\ref{sec:deducingThinness} then gives the proof that a generic limit-periodic word model over a \ample\ set enjoys Cantor spectrum of zero Hausdorff dimension.
We conclude with a brief subsection explaining the modifications needed to deal with the continuum setting.

\subsection*{Acknowledgements} 
J.F., M.N.G., and H.J.H.\ were supported in part by National Science Foundation Grant DMS 2213196. 
J.F.\ was supported in part by National Science Foundation Grant DMS 2513006
and Simons Foundation Grant MPS-TSM-00013720.
J.F.\ thanks the American Institute of Mathematics for hospitality at a recent SQuaRE program, where some of this work was done.

\section{Precise Statements of Results}\label{sec:results}

\subsection{Framework: Word Models}

Our first goal is to establish a coherent framework to discuss limit-periodic sequences in general spaces.
To that end,
suppose $(Y,d)$ is a complete metric space. 
For $k \in \bbN$, we will write $Y^k = \{\bm{y} = (y_j)_{j=1}^k\}$ and $Y^\bbZ = \{ \bm y = (y_j)_{j\in \bbZ}\}$, equipped with the induced uniform metrics (which by abusing notation we denote by the same letter)
\begin{equation}
\label{eq:unifMetricDef} d(\bm y, \bm z) := \sup_j d(y_j,z_j).
\end{equation}
We write $\per(\bbZ,Y)$ for the set of \emph{periodic} elements of $Y^\bbZ$, that is, those $\bm y$ for which there exists a \emph{period} $q \in \bbN := \{1,2,\ldots\}$ such that $y_{j+q} \equiv y_j$.
The set of limit-periodic $Y$-valued functions is then given by the closure of $\per(\bbZ,Y)$ with respect to the metric $d$ and is denoted by
\begin{equation}
\LP(\bbZ,Y): = \overline{\per(\bbZ,Y)}.
\end{equation}

Concatenations play an important role in the discussion below, so we will write a typical element of $Y^k$ as $\bm x = x_1 x_2 \cdots x_k$ and refer to it as a \emph{word} over $Y$. 
For $\bm x = x_1 \cdots x_k \in Y^k$ and $\bm y = y_1 \cdots y_\ell \in Y^\ell$, we write $\bm{x}  \concatenate\bm{y}$ for their concatenation $x_1 \cdots x_k y_1 \cdots y_\ell \in Y^{k+\ell}$, which we often just abbreviate as $\bm{xy}$; this makes
\begin{equation}
    Y^\star := \bigcup_{k=1}^\infty Y^k,
\end{equation}
the set of all words over $Y$, into a semigroup.
We also write   $\bm{x}^{\concatenate m}$ for the element of $Y^{mk}$ obtained by concatenating $m$ copies of $\bm x$:
   \[ \bm{x}^{\concatenate m}: = \underbrace{\bm{x} \concatenate \bm{x} \concatenate \cdots \concatenate\bm{x}}_{m \text{ copies}} ,\]
   and similarly $\bm{x}^{\concatenate\bbZ}$ denotes the $k$-periodic element of $\per(\bbZ,Y)$ that coincides with $\bm x$ on $\{1,2,\ldots,k\}$.
   
In the analysis that follows, we will need to compare elements of $Y^\star$ having different lengths.
   The relevant notion of approximation is supplied by asking how close their periodic extensions are to one another in the $\ell^\infty$ norm, that is, we will define
   \begin{equation}
       \label{eq:unifMetricDefUneven} 
       d(\bm x, \bm y) 
       := d(\bm x^{\concatenate\bbZ}, \bm y^{\concatenate\bbZ}), \quad \bm{x}, \bm{y} \in Y^\star.
       \end{equation}
   Let us point out that $d$ is no longer a metric on $Y^\star$ since, for example $d(\bm{x}^{\concatenate k}, \bm{x}^{\concatenate \ell})=0$ for any choice of $\bm x$, $k$, and $\ell$.
   Note however that $d(\bm x, \bm y)  =0$ if and only if $\bm x^{\concatenate \bbZ} = \bm y^{\concatenate  \bbZ}$.

Finally, observe that $y_0^{\concatenate\bbZ}$ is an isolated point of $\LP(\bbZ,Y)$ whenever $y_0$ is an isolated point of $Y$.
   Thus, to avoid trivialities in the discussion below, we will also assume that $Y$ is perfect, that is, without isolated points.

\subsection{One-Parameter Families of Matrices}
Denote by $\SL(2,\bbR)$ the collection of $2 \times 2$ matrices with entries in $\bbR$ and determinant $1$.
A matrix $A \in \SL(2,\bbR)$ is called:
\begin{itemize}
\item \emph{elliptic} if $|\tr(A)|<2$,
\item \emph{hyperbolic} if $|\tr(A)|>2$,
\item \emph{parabolic} if $|\tr(A)|=2$ and $A \neq \pm \idty$.
\end{itemize}

   As before, let $Y$ denote a perfect complete metric space. Assume that $A:Y \times \bbR \to \SL(2,\bbR)$ is jointly continuous as a function on $Y \times \bbR$. 
   We extend $A$ to the set of words and obtain a map $A:Y^\star \times \bbR \to \SL(2,\bbR)$ by insisting that
   \begin{equation} \label{eq:TMantiHomom}
   A(\bm{x} \concatenate \bm{y},\energy) = A(\bm y, \energy)A(\bm x, \energy), \quad \forall \bm x, \bm y \in Y^\star, \ \energy \in \bbR.
   \end{equation}
    For $k \in \bbN$, it will be useful to write $A_k$ for the induced map on $Y^k \times \bbR$:
   \begin{equation}
   \label{eq:AndefonXn} A_k(\bm x,\energy) = A(x_k,\energy) \cdots A(x_2,\energy) A(x_1,\energy), \quad \bm{x} = x_1\cdots x_k \in Y^k, \ \energy \in \bbR.
   \end{equation}

The following definition captures the primary input that is needed to open spectral gaps and run the spectral theoretic arguments later in the manuscript: away from a discrete exceptional set, any word can be perturbed to produce a new word for which the output of $A$ is hyperbolic.
\begin{definition}
Let $Y$ be a perfect complete metric space and $A : Y \times \bbR \to \SL(2,\bbR)$ be continuous as before.
We say that $A$ is  \emph{\great}\ with exceptional set $S \subseteq \bbR$ if $S$ is discrete and for all $\energy \in \bbR \setminus S$,   $\bm x \in Y^\star$, and $\varepsilon>0$ there exists $\bm y = \bm y(E,\bm{x},\varepsilon) \in Y^\star$ such that $d(\bm x, \bm y)<\varepsilon$ and  $A(\bm y, \energy)$ is hyperbolic.
\end{definition}

\begin{definition}
With $Y$ and $A$ as before, we say that $A$ is  \emph{\pregreat} with exceptional set $S \subseteq \bbR$ if $S$ is discrete and for all $\energy \in \bbR \setminus S$, there exists some $y \in Y$  such that $A(y,\energy)$ is hyperbolic.
\end{definition}

Here we emphasize the key advantage of the present idea over previous work: being \pregreat\ only needs to be checked ``at the first scale'' (i.e., on $Y \times \bbR$) whereas being \great\ must be checked ``at all scales'' (i.e., on all of $Y^\star \times \bbR$) and hence is in principle substantially more difficult to verify in practice.
Furthermore, being \pregreat\ is a ``global'' property: one simply needs to know that some element \textit{somewhere} in $Y$ produces a hyperbolic matrix, whereas \great ness contains ``local'' information: any word whatsoever can be perturbed slightly to open a spectral gap at a desired energy.
Our first result is that (under the assumption of \emph{analyticity}) every \pregreat\ map is \great.

\begin{theorem} \label{t:prerichImpliesRich}
Assume $X$ is a Banach space and that $A:X \times \bbR \to \SL(2,\bbR)$ is analytic. If $A$ is \pregreat, then it is \great.
\end{theorem}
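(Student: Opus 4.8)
The plan is to verify the gap-rich condition using the \emph{same} discrete exceptional set $S$ supplied by non-ahyperbolicity. Fix $E \in \bbR \setminus S$, pick $y_* \in X$ with $A(y_*,E)$ hyperbolic, fix a word $\bm x = x_1 \cdots x_k \in X^\star$ and $\varepsilon > 0$, and set $M := A(\bm x, E)$. If $M$ is hyperbolic we take $\bm y = \bm x$, so assume $M$ is non-hyperbolic. The key structural point is that $d(\bm y,\bm x) = d(\bm y,\bm x^{\concatenate m})$, so it suffices to find $m \in \bbN$ and words $\bm z^{(1)},\dots,\bm z^{(m)}$, each a letterwise $\varepsilon$-perturbation of $\bm x$, with $A(\bm z^{(1)}\concatenate\cdots\concatenate\bm z^{(m)},E) = A_k(\bm z^{(m)},E)\cdots A_k(\bm z^{(1)},E)$ hyperbolic. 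In other words, writing $\mathcal{P}_\varepsilon := A_k\big(\prod_{i=1}^{k} B_\varepsilon(x_i),E\big)$, a connected subset of $\SL(2,\bbR)$ containing $M$, it suffices to show that the sub-semigroup of $\SL(2,\bbR)$ generated by $\mathcal{P}_\varepsilon$ contains a hyperbolic matrix.

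First I would use analyticity to turn this local problem into a global one. The induced map $A_k(\cdot,E):X^k \to \SL(2,\bbR)$ is analytic and $X^k$ is a connected Banach space, so by the identity theorem for analytic functions: if $\mathcal{P}_\varepsilon$ lies in the common zero set of a collection of polynomials on $\SL(2,\bbR)$, then so does the entire image $\mathcal{Q} := A_k(X^k,E)$. Hence $\mathcal{P}_\varepsilon$ and $\mathcal{Q}$ have the same Zariski closure. Now $\mathcal{Q}$ is connected and contains both $M$ and the hyperbolic matrix $A(y_*,E)^k$ (in particular $A_k(\cdot,E)$ is non-constant). Since the proper algebraic subgroups of $\SL(2,\bbR)$ containing a hyperbolic element are exactly the conjugates of the split torus, of its normalizer, and of the Borel subgroup --- and a connected subset of a torus normalizer that contains a hyperbolic element lies in the Borel subgroup containing that torus --- it follows that the Zariski closure of $\mathcal{Q}$, and hence of $\mathcal{P}_\varepsilon$, is either all of $\SL(2,\bbR)$ or a conjugate of the Borel.

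If it is a conjugate of the Borel, then inside that subgroup the trace equals $a + a^{-1}$ for the (real) upper-left entry $a$, so $\tr A_k(\cdot,E)$ is an analytic map $X^k \to (-\infty,-2]\cup[2,\infty)$ which is non-constant (otherwise $A_k(\cdot,E)$ would never be hyperbolic, contradicting $A(y_*,E)^k \in \mathcal{Q}$); since $|\tr M|=2$, the nonempty open set $\prod_{i=1}^{k}B_\varepsilon(x_i)$ already contains a word $\bm z$ with $|\tr A_k(\bm z,E)| > 2$, so $\mathcal{P}_\varepsilon$ itself contains a hyperbolic matrix and we are done with $m=1$. If instead $\mathcal{P}_\varepsilon$ is Zariski-dense in $\SL(2,\bbR)$, I would invoke the fact that a Zariski-dense sub-semigroup of $\SL(2,\bbR)$ must contain a hyperbolic element; combined with the first paragraph this yields the desired $\bm y$ and completes the proof.

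I expect the main obstacle to be this last ingredient: a semigroup version of the Tits alternative for $\SL(2,\bbR)$, to the effect that a sub-semigroup containing no hyperbolic element is ``elementary'' --- it fixes a point or an unordered pair of points of $\bbR\cup\{\infty\}$, or is precompact --- and therefore lies in a proper algebraic subgroup. The delicate point is that one may not simply pass to the generated group, since inverting elements could manufacture hyperbolicity the semigroup lacks, so the argument must be run on the semigroup directly, distinguishing whether it contains a parabolic element (iterate it against any element not fixing its fixed point to produce a hyperbolic product), contains an infinite-order elliptic element $g$ (the closure of $\{g^n : n \ge 1\}$ is a circle subgroup $K$, and for $h \notin K$ one has $\sup_{R\in K}|\tr(Rh)| > 2$, so some $g^n h$ is hyperbolic), or has all elements of finite order (then it is a torsion subgroup of $\SL(2,\bbR)$, hence conjugate into $\SO(2)$). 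A secondary, bookkeeping task is to confirm the short list of algebraic subgroups of $\SL(2,\bbR)$ and which of them meet the hyperbolic locus, as used in the second paragraph.
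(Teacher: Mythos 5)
Your proposal takes a genuinely different route from the paper's, and the difference is worth spelling out. After reducing to $A_k$ being \pregreat\ (which is your observation that $A(y_*,E)^k$ lies in the image of $A_k(\cdot,E)$; this is Lemma~\ref{lem:oneparam3}), the paper splits on the type of $A_k(\bm x,E)$. In the elliptic case the key input is Lemma~\ref{lem:oneparam1}: an elliptic and a hyperbolic matrix never commute, so the commutator map $(\bm y,\bm y')\mapsto [A_k(\bm y,E),A_k(\bm y',E)]$ is not identically zero and, by analyticity and the identity theorem, does not vanish on any open set; hence one finds noncommuting elliptics $A_k(\bm x_1,E),A_k(\bm x_2,E)$ arbitrarily near $A_k(\bm x,E)$ and invokes the narrow result from \cite{EFGL2022JFA} that the semigroup generated by two noncommuting elliptic $\SL(2,\bbR)$ matrices contains a hyperbolic element. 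The trace-$\pm2$ case is handled by a small wiggle. Your approach replaces that single citation by a full semigroup Tits alternative for $\SL(2,\bbR)$, together with a classification of Zariski closures. That is a heavier ingredient, but the Zariski/connectedness argument --- especially the observation that in the Borel case $\tr A_k(\cdot,E)$ is an analytic map into the disconnected set $(-\infty,-2]\cup[2,\infty)$ that is forced to be nonconstant and hence to exit $\{\pm2\}$ on every open ball --- is clean and could be of independent use.

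Two points in the writeup need more care. First, the Zariski closure of the \emph{set} $\mathcal{Q}=A_k(X^k,E)$ is just an algebraic subvariety and need not be a subgroup, so the classification of proper algebraic subgroups cannot be applied to it directly; what you actually want to classify is the Zariski closure $G$ of the \emph{semigroup} generated by $\mathcal{P}_\varepsilon$, which is an algebraic subgroup and which contains $\mathcal{Q}$ because the Zariski closures of $\mathcal{P}_\varepsilon$ and $\mathcal{Q}$ agree. With that correction the Borel/torus/normalizer cases go through as you describe, and the elliptic case forces $G=\SL(2,\bbR)$ since no conjugate of the Borel contains an elliptic element. Second, the torsion case in your Tits-alternative sketch is more delicate than asserted: finite-order elliptic elements of $\SL(2,\bbR)$ can have arbitrarily large operator norm, so a torsion subgroup is not obviously precompact, and the conclusion that it is conjugate into $\SO(2,\bbR)$ ultimately rests on a Schur/Burnside-type theorem (finitely generated torsion linear groups in characteristic $0$ are finite; finite subgroups of $\SL(2,\bbR)$ are cyclic; hence the torsion group is locally cyclic, therefore abelian, and commuting elliptics share a fixed point in the upper half-plane). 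You correctly identified the semigroup Tits alternative as the main obstacle, and these are exactly the places where it needs shoring up; the paper's route sidesteps it entirely by leaning on the more specialized lemma from \cite{EFGL2022JFA}.
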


Throughout this paper, we freely use the notion of analytic maps between Banach spaces. We point the reader to Federer for additional background and reference, especially \cite[Section~3.1.24]{Federer1969GMTBook}.

\subsection{Discrete Schr\"odinger Operators}

We will demonstrate the usefulness of the criterion described in Theorem~\ref{t:prerichImpliesRich} by showing that it can be applied to a wide variety of transfer matrix maps generated by discrete Schr\"odinger operators and that, when the criterion applies, one has Cantor spectrum of zero Hausdorff dimension for typical limit-periodic Schr\"odinger operators given by concatenating relevant strings.

Given a bounded, self-adjoint operator $H$ acting on a suitable Hilbert space, a fundamental quantity of interest is the \emph{spectrum}:
\begin{equation}
\spectrum H
= \set{\energy \in \bbC : H-\energy \idty \text{ is not invertible}},
\end{equation}
which is a compact subset of $\bbR$ under the given assumptions on $H$.
We also write
\[
\resolvent H = \bbC \setminus \spectrum H
\]
for the complementary set, called the \emph{resolvent set} of $H$.

\begin{definition}
    Define for $v,\energy \in \bbR$:
\begin{equation} \label{eq:TvEDefin}
 T(v,\energy) =\begin{bmatrix}
        \energy - v & -1 \\ 1 & 0
    \end{bmatrix}.
    \end{equation}

\end{definition}
The reader can verify that $u \in \bbC^\bbZ$ satisfies
\begin{equation}
u(n-1) + u(n+1) + V(n)u(n) = \energy u(n) \quad \forall n \in \bbZ
\end{equation}
if and only if
\begin{equation}
\begin{bmatrix} u(n+1) \\ u(n) \end{bmatrix}
= T(V(n),\energy) \begin{bmatrix} u(n) \\ u(n-1) \end{bmatrix}\quad \forall n \in \bbZ.
\end{equation}

For $\bm y = (y_j)_{j=1}^m \in (\bbR^k)^m$, $\bm y$ determines an element of $\bbR^{mk}$ in a natural manner by aggregation. We denote the aggretate of $\bm y$ by $\bm y^\aggregate$, that is, if $y_j = y_j(1)y_j(2) \cdots y_j(k)$, then
\begin{align}\nonumber
    \bm y^\aggregate
    & = ((y_1(1) \cdots y_1(k))(y_2(1) \cdots y_2(k)) \cdots (y_m(1) \cdots y_m(k)))^\aggregate \\
    & := y_1(1) \cdots y_1(k)y_2(1) \cdots y_2(k) \cdots y_m(1) \cdots y_m(k)
    \in \bbR^{mk}.
\end{align}
Similarly, given $\bm y = (y_j)_{j \in \bbZ} \in (\bbR^k)^\bbZ$, we define $\bm y^\aggregate \in \bbR^\bbZ$ by
\[ \bm y^\aggregate(jk+\ell)
= 
y_j(\ell),
\quad j \in \bbZ, \ 1 \le \ell \le k.
\]
We write the corresponding Schr\"odinger operator as $H_{\bm y} := \Delta + V_{\bm y} := \Delta + {\bm y}^\aggregate$. 
Given $\bm x \in (\bbR^k)^p$, we write $H_{\bm x}$ to mean $H_{\bm{x}^{\concatenate \bbZ}}$, where we recall that $\bm{x}^{\concatenate \bbZ } \in \per(\bbZ,\bbR^k)$ is obtained by repeating $\bm x$ $p$-periodically.
The following connection between spectra of periodic operators and transfer matrices is fundamental. 
See, e.g., \cite[Chapter~7]{DF2024ESO2}, \cite[Chapter~5]{Simon2011Szego}, or \cite[Chapter~7]{Teschl2000Jacobi} for proofs and further discussion.

\begin{theorem} \label{t:floquetmain}
If $q \in \bbN$ and $x \in \bbR^q$, then
\begin{equation}
\spectrum H_x 
= \set{\energy \in \bbR : \tr T(x,\energy) \in [-2,2]},
\end{equation}
where we recall that $T$ is defined on $\bbR \times \bbR$ by \eqref{eq:TvEDefin} and extended to $\bbR^q \times \bbR$ by concatenation as in \eqref{eq:TMantiHomom}.
Equivalently,
\begin{equation}
\bbR \setminus \spectrum H_x
= \set{\energy \in \bbR: T(x,\energy) \text{ is hyperbolic}}.
\end{equation}
\end{theorem}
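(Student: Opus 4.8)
The plan is to prove the first identity, $\spectrum H_x = \set{E \in \bbR : \tr T(x,E) \in [-2,2]}$; the ``$\resolvent$'' formulation then follows by taking complements in $\bbR$ — legitimate since $\spectrum H_x \subseteq \bbR$ because $H_x$ is bounded and self-adjoint — together with the elementary observation that $A \in \SL(2,\bbR)$ has $\tr A \notin [-2,2]$ exactly when $|\tr A| > 2$, i.e.\ exactly when $A$ is hyperbolic. Write $V := x^{\concatenate\bbZ} \in \per(\bbZ,\bbR)$ for the $q$-periodic potential of $H_x$, set $M(E) := T(x,E) \in \SL(2,\bbR)$ (a product of determinant-one matrices), and let $D(E) := \tr M(E)$, a real polynomial in $E$. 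The starting point is the transfer-matrix dictionary recalled in the excerpt: $u \in \bbC^\bbZ$ solves $(H_x - E)u = 0$ iff the pair $(u(n),u(n-1))$ is advanced to $(u(n+1),u(n))$ by $T(V(n),E)$, so by $q$-periodicity of $V$ one full period advances $(u(n),u(n-1))$ to $(u(n+q),u(n+q-1))$ via $M(E)$. I also use the standard fact that if $u,v$ both solve $(H_x-E)\phi = 0$ then the Wronskian $W_n(u,v) := u(n+1)v(n) - u(n)v(n+1)$ is independent of $n$.

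First I would show $\set{E : |D(E)| > 2} \subseteq \resolvent H_x$. Fix such an $E$; then $M(E)$ is hyperbolic with eigenvalues $\mu^{\pm 1}$, $|\mu| > 1$, and real eigenvectors $w_\pm$. Let $u^+$ (resp.\ $u^-$) be the solution of $(H_x-E)\phi = 0$ with initial pair $(u^\pm(1),u^\pm(0))$ equal to $w_+$ (resp.\ $w_-$). Propagating the monodromy relation period by period gives $|u^+(n)| \asymp |\mu|^{n/q}$ and $|u^-(n)| \asymp |\mu|^{-n/q}$ up to bounded corrections within a period; in particular $u^+$ decays geometrically as $n \to -\infty$ and $u^-$ decays geometrically as $n \to +\infty$. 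Since $w_+$ and $w_-$ span distinct eigendirections the solutions are linearly independent, so $W := W_n(u^+,u^-) \neq 0$, and one forms the candidate resolvent kernel
\[
G(m,n) = \frac{1}{W}\begin{cases} u^+(m)\,u^-(n), & m \le n, \\ u^+(n)\,u^-(m), & m \ge n. \end{cases}
\]
The growth/decay estimates yield $|G(m,n)| \le C\,|\mu|^{-|m-n|/q}$, so a Schur test shows $G$ is bounded on $\ell^2(\bbZ)$, and the standard variation-of-parameters computation (the difference equation off the diagonal, constancy of $W$ producing the delta on the diagonal) shows $(H_x - E)G = G(H_x - E) = \idty$; hence $E \in \resolvent H_x$.

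Next I would show $\set{E : |D(E)| \le 2} \subseteq \spectrum H_x$ by building Weyl sequences. Fix such an $E$; then $M(E) \in \SL(2,\bbR)$ has a unimodular eigenvalue $\zeta$ — eigenvalues $e^{\pm i\alpha}$ with $\alpha \notin \pi\bbZ$ if $|D(E)| < 2$, and $\zeta = \pm 1$ if $|D(E)| = 2$, whether $M(E) = \pm\idty$ or $M(E)$ is parabolic. Pick $u \not\equiv 0$ solving $(H_x-E)\phi = 0$ with $(u(1),u(0))$ an eigenvector of $M(E)$ for $\zeta$ (any nonzero solution if $M(E) = \pm\idty$); then $u(n+q) = \zeta u(n)$ for all $n$, so $|u|$ is $q$-periodic. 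As a nonzero solution cannot vanish at two consecutive sites, $|u|$ is bounded above and bounded away from zero off its finitely many zeros per period, whence $\sum_{|n|\le N}|u(n)|^2 \asymp N$. Setting $\psi_N := \chi_{[-N,N]}u\,/\,\|\chi_{[-N,N]}u\|_2$, we get $\|\psi_N\| = 1$, while $(H_x - E)\psi_N$ is supported on $O(1)$ sites near $\pm N$ with numerator of size $O(1)$ (the equation holding exactly in the interior), so $\|(H_x - E)\psi_N\| = O(N^{-1/2}) \to 0$; thus $E \in \spectrum H_x$.

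Combining the two inclusions gives $\spectrum H_x = \set{E : |D(E)| \le 2} = \set{E \in \bbR : \tr T(x,E) \in [-2,2]}$, and the ``$\resolvent$'' form follows as explained. An equivalent packaging uses the Floquet--Bloch transform realizing $H_x$ as a direct integral over a circle of $q\times q$ matrices $H_x(\theta)$ with $\theta$-quasiperiodic boundary conditions: then $\spectrum H_x = \overline{\bigcup_\theta \spectrum H_x(\theta)}$, and $E \in \spectrum H_x(\theta)$ iff the Bloch relation $u(n+q) = e^{i\theta}u(n)$ has a nonzero solution, i.e.\ iff $e^{i\theta}$ is an eigenvalue of $M(E)$, which occurs for some $\theta$ exactly when $M(E)$ has a unimodular eigenvalue, i.e.\ when $|D(E)| \le 2$; the outer closure is harmless because $\set{E : |D(E)| \le 2}$ is already closed ($D$ being a polynomial). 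I expect the main obstacle to be the first inclusion: verifying that $G$ is genuinely bounded on $\ell^2(\bbZ)$ and is a two-sided inverse of $H_x - E$ (the Schur-test bound from geometric decay of the Weyl solutions, plus the operator identity from the Wronskian computation). The only other point needing care is the parabolic boundary case $|D(E)| = 2$, $M(E) \neq \pm\idty$, where one must confirm that the eigenvector solution is bounded (it is) and does not decay (it does not), so that the Weyl-sequence normalization indeed grows like $\sqrt N$.
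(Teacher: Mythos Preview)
Your argument is correct and is essentially the standard textbook proof of this fundamental Floquet-theory fact. Note, however, that the paper does not actually prove Theorem~\ref{t:floquetmain}: it is stated as background and the reader is referred to \cite[Chapter~7]{DF2024ESO2}, \cite[Chapter~5]{Simon2011Szego}, or \cite[Chapter~7]{Teschl2000Jacobi} for proofs. What you have written is precisely the argument one finds in those references (Green's function from exponentially decaying Weyl solutions when $|D(E)|>2$; Weyl sequences from a bounded Floquet solution when $|D(E)|\le 2$), so there is nothing to compare.
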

In view of Theorem~\ref{t:floquetmain}, $\spectrum H_x$ consists of the $\energy \in \bbR$ for which the \emph{discriminant} 
\[
D(x,\energy):=\tr T(x,\energy)
\]
lies in $[-2,2]$. 
    One can show that the set of $\energy \in \bbR$ such that $|D(x,\energy)|<2$ consists of $q$ disjoint open intervals, which we denote by $(E_j^-, E_j^+)$, $j=1,2,\ldots,q$, ordered so that $E_1^- < E_1^+ \leq E_2^- < \cdots < E_q^+ $.
    Thus,
    \begin{equation} \label{eq:Ejpmdef}
        \spectrum H_x = \bigcup_{j=1}^q [E_j^-,E_j^+].
    \end{equation}
    We call the intervals $[E_j^-, E_j^+]$ with $j=1,2,\ldots, q$ the \emph{bands} of the spectrum of $H_x$. Note that consecutive bands need not necessarily be disjoint, but they must have disjoint interiors.

\begin{definition}
We say that a closed perfect set $Y \subseteq \bbR^k$ is \emph{\ample} with \emph{exceptional set} $S \subseteq \bbR$ if $S$ is discrete and the following holds true:
for every $\energy \in \bbR \setminus S$,  $\bm x \in Y^\star$, and $\varepsilon>0$, there exists $\bm y = \bm{y}(\energy,\bm{x},\varepsilon) \in Y^\star$ such that 
\begin{equation}
d(\bm{x}, \bm{y}) < \varepsilon
\quad \text{and}\quad
\energy \notin \spectrum(H_{\bm y}),
\end{equation}
where we recall $d$ is given by \eqref{eq:unifMetricDefUneven}.
\end{definition}

\begin{remark}
As an immediate consequence of the definitions and Theorem~\ref{t:floquetmain}, we have the following statement:
a set $Y \subseteq \bbR^k$ is \ample\ if and only if the map $Y \times \bbR \to \SL(2,\bbR)$ given by $(x,\energy) \mapsto T(x,\energy)$ is \great.
\end{remark}

Here, we give the main application of the notions.

\begin{theorem} \label{t:richToZHD}
    If $Y \subseteq \bbR^k$ is \ample, then $\spectrum(H_{\bm y})$ is a Cantor set of zero Hausdorff dimension for generic $\bm y \in \LP(\bbZ,Y)$.
\end{theorem}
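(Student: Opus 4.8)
The plan is to establish Theorem~\ref{t:richToZHD} by a standard Baire category argument combined with a quantitative gap-opening estimate derived from the $\ample$ hypothesis. First I would set up the arena: the space $\LP(\bbZ,Y)$ is a complete metric space (it is a closed subset of $Y^\bbZ$ with the uniform metric, and $Y$ is complete), so the Baire category theorem applies. The goal is to exhibit, for each $N \in \bbN$, a dense open set $\mathcal{U}_N \subseteq \LP(\bbZ,Y)$ such that every $\bm y \in \mathcal{U}_N$ has spectrum with a ``$1/N$-dense system of gaps'' away from a fixed compact exhaustion of $\bbR \setminus S$; more precisely, one wants that for $\bm y$ in a residual set, $\spectrum(H_{\bm y})$ meets every interval of a chosen countable collection in a set of small Hausdorff content, and is nowhere dense. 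Intersecting over $N$ and over the exhausting compacta gives a residual set on which the spectrum is a Cantor set of zero Hausdorff dimension.

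The key mechanism is the following: given a periodic $\bm x = \bm{z}^{\concatenate \bbZ}$ with $\bm z \in Y^p$, and given a finite collection of energies $E_1,\ldots,E_r \in \bbR \setminus S$, the $\ample$ property lets me perturb $\bm z$ slightly (in the $d$-metric on $Y^\star$) to some $\bm w \in Y^\star$ with $E_i \notin \spectrum(H_{\bm w})$ for each $i$; iterating, I may assume all the $E_i$ lie in open gaps of $\spectrum(H_{\bm w})$. By continuity of the discriminant $D(\bm w, \cdot)$, each such gap has positive length, so there is $\delta > 0$ with $\dist(E_i, \spectrum(H_{\bm w})) > \delta$. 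Now the crucial quantitative step: for any limit-periodic $\bm y$ that is $\ell^\infty$-close to $\bm{w}^{\concatenate \bbZ}$ and has the same period structure refined appropriately, the spectrum $\spectrum(H_{\bm y})$ is close to $\spectrum(H_{\bm w})$ in Hausdorff distance (standard bound $\dist_{\Hausdorff}(\spectrum H_V, \spectrum H_{V'}) \le \|V - V'\|_\infty$), so those gaps persist, and the resolvent set contains fixed-size intervals around each $E_i$. One organizes the $E_i$ to form a $1/N$-net of a compact set $K_N \subseteq \bbR \setminus S$, and one must also ensure the constructed periodic potentials have large period so that each band is short. This is where one invokes the zero-measure/thinness machinery: by subdividing a period into many copies (which does not change the periodic spectrum) and then perturbing on a finer scale, one arranges that within each remaining band, the discriminant has large derivative or many oscillations, forcing bands of length $\le C^{-n}$; summing the Hausdorff contents over the (exponentially many) bands and using that there are many gaps gives total $s$-dimensional Hausdorff content $\to 0$ for every $s > 0$.

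Concretely, the dense open sets are $\mathcal{U}_N = \{\bm y \in \LP(\bbZ,Y) : \spectrum(H_{\bm y}) \cap K_N$ is covered by finitely many intervals of total length $< 1/N$ with at least $N$ complementary gaps in $K_N\}$; openness follows from Hausdorff continuity of the spectrum in the potential and the fact that finitely many strict gaps and the covering condition are open constraints; density follows from the perturbation argument above, using that periodic potentials are dense in $\LP(\bbZ,Y)$ and the $\ample$ hypothesis to open the required gaps. To upgrade ``finite total length $<1/N$'' to ``zero Hausdorff dimension'' one instead uses sets $\mathcal{U}_{N,s} = \{\bm y : \mathcal{H}^s_{1/N}(\spectrum(H_{\bm y}) \cap K_N) < 1/N\}$ for $s$ ranging over a countable dense subset of $(0,1)$; here $\mathcal{H}^s_\varepsilon$ is the $\varepsilon$-restricted $s$-dimensional Hausdorff content, which is continuous from above, giving openness, while density requires the finer gap-opening that produces exponentially many exponentially short bands. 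That $\spectrum(H_{\bm y})$ is closed with empty interior (hence Cantor, being also perfect as the spectrum of a limit-periodic operator with no eigenvalues in a residual class — or more directly, having zero Lebesgue measure and being perfect by standard limit-periodic structure) follows since zero Hausdorff dimension forces zero Lebesgue measure, so the spectrum is nowhere dense, and the spectrum of any limit-periodic Schr\"odinger operator has no isolated points.

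\medskip

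\noindent\textbf{Main obstacle.} The hard part will be the quantitative gap-opening at fine scales: the bare $\ample$ hypothesis only asserts that \emph{some} perturbation opens a gap at a single energy $E$, with no control on how the perturbation interacts with gaps already opened at other energies, nor on the lengths of the surviving bands. Converting this qualitative one-energy statement into a uniform ``many short bands, many gaps, across a whole compact net of energies, stable under further $\ell^\infty$-small perturbation'' statement — so that the residual-set construction actually yields Hausdorff dimension zero rather than merely zero Lebesgue measure — is the technical crux. I expect this is handled by an inductive scheme: open a gap at $E_1$, pass to a high power so bands are short and the gap is robust, then within each remaining band locate an energy where the map is still non-exceptional, perturb again on a finer scale to open a sub-gap there while keeping the ambient perturbation small enough not to close previously opened gaps (using the Hausdorff-continuity bound and a rapidly decreasing sequence of $\varepsilon_j$), and repeat; the bookkeeping that the total perturbation stays summable and that the band lengths shrink geometrically is the delicate part.
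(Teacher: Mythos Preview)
Your high-level architecture --- Baire category on $\LP(\bbZ,Y)$, dense open sets defined via Hausdorff-content covering conditions, density through periodic approximation --- matches the paper's, and you have correctly isolated the crux: converting the one-energy, qualitative gap-opening into a quantitative statement yielding exponentially short bands. But your proposed resolution of that crux does not work as stated. The inductive scheme you sketch (open a gap at $E_1$, pass to a high power, then open a sub-gap in each surviving band, iterate with summable $\varepsilon_j$) controls neither the \emph{width} of any newly opened gap nor the \emph{length} of the remaining bands: the gap-rich hypothesis guarantees only that $E$ lands in the resolvent of some nearby periodic operator, with no lower bound on the distance from $E$ to the spectrum. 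Passing to a power $\bm z^{\concatenate m}$ subdivides each band into $m$ pieces but leaves the total measure unchanged, so after each step the surviving spectrum could have essentially the same Lebesgue measure as before. Nothing in your scheme produces the geometric shrinkage of band lengths that Hausdorff dimension zero requires; at best it would yield nowhere-dense spectrum.

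The paper's mechanism (Theorems~\ref{t:thinspecA}--\ref{t:thinspecB}) is different and resolves exactly this point. Given compact $K\subset\bbR\setminus S$, one first uses gap-richness together with compactness to produce finitely many words $\bm c_1,\ldots,\bm c_m\in Y^{tp}$, each $\varepsilon$-close to the given block $\bm a$, whose resolvent sets \emph{jointly cover} $K$. One then forms the single concatenation $\bm y = \bm c_1^{\concatenate u}\cdots\bm c_m^{\concatenate u}\bm a^{\concatenate(N-mtu)}$ for large $u$. At any $E\in K$ with $|\tr T(\bm y,E)|<2$, some $\bm c_j$ is hyperbolic at $E$, so the transfer matrix across the block $\bm c_j^{\concatenate u}$ has norm at least $e^{cu}$. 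The decisive analytic input --- absent from your plan --- is the identity \eqref{eq:dkdEviaconjugacy}, which expresses $\partial_E\,\ids(\bm y,E)$ as an average of $\|M(\cyclic^\ell\bm y^\aggregate,E)\|_2^2$ over cyclic shifts; the large block norm forces at least one of these conjugacy norms to exceed $e^{cu/2}$, whence $\partial_E\,\ids \gtrsim e^{cN}$ uniformly on $K$. Since each band carries IDS-mass exactly $1/(Npk)$, every band meeting $K$ has length $\lesssim e^{-cN}$. This one-shot exponential thinning replaces your multi-scale induction; once it is available, the remaining limit-periodic construction and the $G_\delta$ bookkeeping proceed essentially as you outlined.
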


For $Y \subseteq \bbR^k$ and $\lambda \in \bbR$, write $\lambda Y:=\{\lambda y: y \in Y\}$.
In the event that $\lambda Y$ is \ample\ for every $\lambda$ in some subset of $\bbR^\times:= \bbR \setminus\{0\}$, one can incorporate a coupling constant, as long as the exceptional set can be chosen uniformly over $\lambda$.

\begin{theorem} \label{t:richToZHDcoupling}
Suppose $S \subseteq \bbR$ is discrete, $Y \subseteq \bbR^k$ is perfect, and $Z \subseteq \bbR^\times$ is $\sigma$-compact.\footnote{Recall that a set is $\sigma$-compact if it can be written as the union of a countable collection of compact sets.}
If $\lambda Y$ is \ample\ with exceptional set $S$ for every $\lambda \in Z$, 
then $\spectrum(H_{\lambda \bm y})$ is a Cantor set of zero Hausdorff dimension for generic $\bm y \in \LP(\bbZ,Y)$ and all $\lambda \in Z$.
\end{theorem}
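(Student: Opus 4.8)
The plan is to run the Baire‑category argument behind Theorem~\ref{t:richToZHD} ``with the coupling constant carried along'', so that a single comeager set of sequences witnesses the conclusion for every $\lambda\in Z$ at once. Write $\LP:=\LP(\bbZ,Y)$, a complete (hence Baire) metric space in which $\per(\bbZ,Y)$ is dense. Two routine reductions come first. For $\lambda\in\bbR^\times$ the dilation $\bm y\mapsto\lambda\bm y$ is a homeomorphism of $\LP$ onto $\LP(\bbZ,\lambda Y)$ with $V_{\lambda\bm y}=\lambda V_{\bm y}$, so $H_{\lambda\bm y}=\Delta+\lambda V_{\bm y}$ and $\|H_{\lambda\bm y}-H_{\lambda\bm z}\|\le|\lambda|\,d(\bm y,\bm z)$ (up to a constant fixed by the norm on $\bbR^k$). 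And since $Z=\bigcup_{R\in\bbN}(Z\cap[-R,R])$, with a countable intersection of comeager subsets of a Baire space again comeager, it suffices to prove the statement with $Z$ replaced by $Z\cap[-R,R]$ for each fixed $R$; hence I may assume $Z\subseteq[-R,R]$. This makes the operator estimate uniform over $\lambda\in Z$ and gives total boundedness of $Z$, both used below.

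The heart of the matter is a coupling‑uniform version of the spectral‑thinning mechanism that drives the proof of Theorem~\ref{t:richToZHD}. That mechanism takes a periodic word $\bm p_0\in\per(\bbZ,Y)$, a compact energy window $K\subseteq\bbR\setminus S$, and data $\varepsilon>0$, $\beta>0$, $N\in\bbN$, and --- using \great ness to open many gaps at energies of $K$ over a cascade of period‑refinements --- produces a periodic $\bm p\in\per(\bbZ,Y)$ with $d(\bm p_0,\bm p)<\varepsilon$ such that $\spectrum(H_{\bm p})\cap K$ is covered by $\le\Lambda$ open intervals, each of length $<\beta$ and with $\sum_i|I_i|^{1/N}<1/N$ (one may take $\Lambda$ to be the period of $\bm p$). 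I would promote this to: $\bm p$ can be chosen so that the same covering conclusion holds \emph{for every $\lambda\in Z$ simultaneously}, with $\Lambda$ depending only on $\bm p$. The relevant robustness is that ``having a gap'' is an \emph{open} condition in the coupling --- the Floquet discriminant of $H_{\lambda\bm p}$ at energy $E$ is a polynomial in $(\lambda,E)$, so a gap present at $\lambda_0$ persists, slightly shortened, for $\lambda$ near $\lambda_0$; equivalently, ``$\spectrum(H)\cap K$ is covered by $\le\Lambda$ intervals each shorter than $\beta$'' degrades, under a small perturbation of $H$, only to the same conclusion with $\beta$ replaced by a slightly larger value. Using total boundedness of $Z$, one covers it by finitely many small intervals and, processing their centers $\lambda_1,\dots,\lambda_J\in Z$ in sequence with rapidly shrinking perturbation budgets, opens --- at each center, using that $\lambda_j Y$ is \great\ with the \emph{same} exceptional set $S$, so that the target energies, drawn once and for all from $K\subseteq\bbR\setminus S$, do not vary with $j$ --- the gaps that the single‑coupling argument dictates there, keeping each successive perturbation small enough (by the openness just noted) that the thinness secured at earlier centers survives. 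The output is one periodic $\bm p$ whose spectrum is uniformly thin on $K$ over all of $Z$.

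Granting this lemma, the Baire scheme proceeds as usual. Fix a nested compact exhaustion $(K_\ell)_{\ell\in\bbN}$ of $\bbR\setminus S$ with $K_\ell\subseteq\interior K_{\ell+1}$ (possible since $S$ is discrete, so $\bbR\setminus S$ is open), and for $m,n,\ell\in\bbN$ let $U_{m,n,\ell}$ consist of the $\bm y\in\LP$ for which there exist $\bm p\in\per(\bbZ,Y)$, $\eta>0$, and $\Lambda\in\bbN$ with $d(\bm y,\bm p)<\eta$, with $2R\eta<\min\{\dist(K_\ell,\bbR\setminus K_{\ell+1}),\,1/(2n)\}$ and $\Lambda(2R\eta)^{1/m}<1/(2n)$, and such that for every $\lambda\in Z$ the set $\spectrum(H_{\lambda\bm p})\cap K_{\ell+1}$ is covered by $\le\Lambda$ open intervals, each of length $<1/(2n)$ and with the sum of their $(1/m)$‑th powers of lengths $<1/(2n)$. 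Each $U_{m,n,\ell}$ is open (the witnesses for $\bm y$ serve every $\bm y'$ with $d(\bm y',\bm p)<\eta$) and dense (approximate a given $\bm y$ by a periodic $\bm p_0$, apply the coupling‑uniform lemma to obtain $\bm p$, then shrink $\eta$, which is legitimate once $\bm p$ and $\Lambda$ are fixed). So $\mathcal G:=\bigcap_{m,n,\ell}U_{m,n,\ell}$ is comeager in $\LP$. For $\bm y\in\mathcal G$ and $\lambda\in Z$: by $\|H_{\lambda\bm y}-H_{\lambda\bm p}\|\le R\eta$, upper semicontinuity of the spectrum, and the choice of $\eta$, the set $\spectrum(H_{\lambda\bm y})\cap K_\ell$ is contained in the $R\eta$‑enlargement of the cover above, which consists of $\le\Lambda$ intervals each of length $<1/n$ and with $(1/m)$‑th‑power sum $<1/n$. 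Given $s>0$, take $m>1/s$; for each $\delta>0$ and each $n$ with $1/n\le\delta$ we then get $\mathcal H^s_\delta\bigl(\spectrum(H_{\lambda\bm y})\cap K_\ell\bigr)\le(\max_i|I_i|)^{s-1/m}\sum_i|I_i|^{1/m}<(1/n)^{1+s-1/m}$, which tends to $0$ as $n\to\infty$; hence $\mathcal H^s\bigl(\spectrum(H_{\lambda\bm y})\cap K_\ell\bigr)=0$. As this holds for all $s>0$ and all $\ell$, $\dim_{\Hausdorff}\bigl(\spectrum(H_{\lambda\bm y})\setminus S\bigr)=0$, and since $S$ is countable, $\dim_{\Hausdorff}\spectrum(H_{\lambda\bm y})=0$. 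Zero Hausdorff dimension forces zero Lebesgue measure, hence empty interior, hence total disconnectedness; perfectness of the spectrum --- making it a Cantor set --- is obtained exactly as in the proof of Theorem~\ref{t:richToZHD}, by intersecting $\mathcal G$ with the corresponding comeager set. Intersecting over $R\in\bbN$ concludes.

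The one genuinely new ingredient is the coupling‑uniform thinning lemma of the second paragraph; everything else is bookkeeping around the argument of Section~\ref{sec:deducingThinness}. What must be checked is that an a priori uncountable family of gap‑opening requirements --- one for each $\lambda\in Z$ --- can be met by a single periodic word at each scale of the construction. The two structural features of the hypothesis are precisely what make this possible: the exceptional set $S$ is common to all the $\lambda Y$, so the admissible target energies do not move with $\lambda$; and each opened gap persists under small changes of the coupling, so that total boundedness of $Z\cap[-R,R]$ collapses the uncountable family to finitely many representatives. The remaining delicacy is the familiar one --- the finitely many perturbations performed at a given scale (over the representative couplings, and over the target energies within each) must be carried out in sequence, with shrinking sizes, so that none undoes another; this uses that lying in an open gap is itself an open condition and that, over the compact parameter ranges in play, the relevant discriminants are uniformly controlled.
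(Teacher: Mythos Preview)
Your overall architecture matches the paper's: reduce to a coupling-uniform periodic thinning lemma (the paper's Theorem~\ref{t:thinspecB}) and then run a Baire argument. The paper exhausts $Z$ by $Z_\ell=\{\lambda\in Z:\Lambda_\ell^{-1}\le|\lambda|\le\Lambda_\ell\}$ rather than your $Z\cap[-R,R]$, and proves perfectness of the spectrum for \emph{every} $\bm y\in\LP(\bbZ,Y)$ via strict ergodicity of the hull rather than only for a comeager set, but these differences are cosmetic; your $G_\delta$ bookkeeping is also fine.

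The genuine issue is in your proposed proof of the coupling-uniform lemma. You cover $Z$ by intervals of some width $w$, then process the centers $\lambda_1,\dots,\lambda_J$ sequentially, at each step applying the single-coupling thinning with a shrinking budget. But there is a circular dependency you do not resolve: for the thinness secured at $\lambda_j$ (a cover by $\Lambda_j$ short intervals) to propagate to the full $w$-neighborhood of $\lambda_j$, one needs roughly $\Lambda_j\,(2wM)^{1/m}$ to be small, hence $w$ small compared to $\Lambda_j^{-m}$; yet $\Lambda_j$ is of the order of the period of $\bm p_j$, which is only determined \emph{after} the processing and grows (multiplicatively) with the number $J\sim 2R/w$ of centers. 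Shrinking $w$ only makes the final $\Lambda$ larger, and nothing in your sketch breaks this loop. The paper sidesteps the difficulty by performing the $(\energy,\lambda)$-compactness at the level of \emph{building blocks} rather than finished words: one first extracts (Claim~1 in the proof of Theorem~\ref{t:thinspecB}) a single finite set $\scrF=\{\bm c_1,\dots,\bm c_m\}\subseteq Y^{tp}$ such that $K\subseteq\bigcup_j\resolvent(H_{\lambda\bm c_j})$ for all $\lambda\in Z_\Lambda$ simultaneously, and only then assembles $\bm y=(\bm c_1)^{\concatenate u}\cdots(\bm c_m)^{\concatenate u}(\bm a)^{\concatenate(N-mtu)}$. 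Because $\scrF$ is fixed \emph{before} the thinness parameter $N$ is chosen, the Lyapunov-exponent argument gives $\Leb(K\cap\spectrum H_{\lambda\bm y})\le e^{-c_0N}$ uniformly in $\lambda$ while the period stays linear in $N$---precisely the decoupling of ``how many intervals'' from ``how fine the $\lambda$-net'' that your sequential scheme is missing.
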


\begin{remark} Since $Y$ is a complete metric space, $\LP(\bbZ,Y)$ is a complete metric space with respect to the uniform metric, so genericity in this context means that the relevant phenomenon occurs for a dense $G_\delta$ set of realizations.
  \end{remark}

Let us give two interesting applications of the sufficient criterion in Theorem~\ref{t:prerichImpliesRich}
which can in turn be combined with Theorems~\ref{t:richToZHD} and \ref{t:richToZHDcoupling} to produce thin Cantor spectra and in particular prove Theorems~\ref{t:main:sieve} and \ref{t:main:repeat}.

\begin{theorem} \label{t:prerichSchroExampleSieving}
    For any $m \in \bbN$, any $\bm b \in \bbR^m$, and any $n \in \bbN$, the set $\{\bm{x} \concatenate\bm{b} : \bm{x} \in \bbR^n\}$ is \ample.
\end{theorem}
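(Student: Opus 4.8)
The plan is to apply Theorem~\ref{t:prerichImpliesRich} after an easy first-scale verification that the relevant cocycle is \pregreat, and then invoke the remark identifying \ample\ sets with \great\ cocycles. To set this up, put $k = n+m$ and $Y = \set{\bm x \concatenate \bm b : \bm x \in \bbR^n} \subseteq \bbR^{k}$; this is a closed affine $n$-plane, hence a perfect complete metric space, and by the remark following the definition of an \ample\ set it suffices to prove that $(x,\energy)\mapsto T(x,\energy)$ is \great\ on $Y \times \bbR$. The affine map $\iota : \bbR^n \to Y$, $\bm x \mapsto \bm x\concatenate\bm b$, is a bijective $\ell^\infty$-isometry, and a direct check shows that $\iota$ identifies words over $\bbR^n$ with words over $Y$ of equal length, identifies the word pseudometric of \eqref{eq:unifMetricDefUneven} (the inserted blocks $\bm b$ occupy the same positions with the same values in the periodic extension of every word over $Y$, so they contribute nothing to the supremum and what remains is exactly the corresponding pseudometric over $\bbR^n$), and---via \eqref{eq:TMantiHomom}---identifies the cocycle $T$ restricted to $Y$ with
\begin{equation*}
A : \bbR^n\times\bbR \to \SL(2,\bbR), \qquad A(\bm x,\energy) := T(\bm x\concatenate\bm b,\energy) = T(\bm b,\energy)\,T(x_n,\energy)\cdots T(x_1,\energy),
\end{equation*}
extended to $(\bbR^n)^\star$ by \eqref{eq:TMantiHomom}. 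Hence it suffices to prove that $A$ is \great. Since $A$ is a polynomial map in $(x_1,\dots,x_n,\energy)$, it is analytic on the Banach space $\bbR^n$, so by Theorem~\ref{t:prerichImpliesRich} we only need to check that $A$ is \pregreat.

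To that end, fix $\energy \in \bbR$, set $x_2 = \cdots = x_n = 0$, and consider the one-parameter family $x_1\mapsto A\bigl((x_1,0,\dots,0),\energy\bigr) = M(\energy)\,T(x_1,\energy)$, where $M(\energy) := T(\bm b,\energy)\,T(0,\energy)^{n-1}$ is a product of $L := n+m-1\ge 1$ matrices, each of the form $\energy P + (\text{bounded})$ with $P = \left[\begin{smallmatrix}1&0\\0&0\end{smallmatrix}\right]$. Writing $T(x_1,\energy) = T(0,\energy) - x_1 P$ and using cyclicity of the trace gives
\begin{equation*}
\tr A\bigl((x_1,0,\dots,0),\energy\bigr) = \tr\bigl(M(\energy)\,T(0,\energy)\bigr) - x_1\,[M(\energy)]_{11},
\end{equation*}
an affine function of $x_1$ with slope $-[M(\energy)]_{11}$. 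Because $P^L = P$, the leading term of the product $M(\energy)$ is $\energy^L P$, so $[M(\energy)]_{11}$ is a monic polynomial in $\energy$ of degree $L\ge 1$; in particular it does not vanish identically, so $S := \set{\energy\in\bbR : [M(\energy)]_{11} = 0}$ is finite, hence discrete. For $\energy\in\bbR\setminus S$ the displayed trace is a nonconstant affine function of $x_1$, so $\lvert\tr A((x_1,0,\dots,0),\energy)\rvert > 2$ once $\lvert x_1\rvert$ is large, and then $A((x_1,0,\dots,0),\energy)$ is hyperbolic. Thus $A$ is \pregreat\ with exceptional set $S$, which completes the proof.

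The only genuinely substantive ingredient is Theorem~\ref{t:prerichImpliesRich}; everything else is elementary, and the one point that repays a moment's attention is the claim that the slope polynomial $[M(\energy)]_{11}$ is not identically zero. I settle this by reading off its leading coefficient, but one may instead appeal to the standard fact that the $(1,1)$-entry of a length-$L$ discrete Schr\"odinger transfer matrix is a monic polynomial of degree $L$ in the energy, whose roots are the Dirichlet eigenvalues on an interval of length $L$. The remaining bookkeeping---that $\iota$ intertwines word concatenation, the pseudometric of \eqref{eq:unifMetricDefUneven}, and transfer matrices---is routine, since the fixed blocks $\bm b$ always land in the same residue classes modulo $k$ with the same values and so never affect the word pseudometric, reducing everything to the corresponding statements over $\bbR^n$.
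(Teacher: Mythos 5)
Your proof is correct and follows essentially the same strategy as the paper's: reduce to checking \pregreat ness for the analytic cocycle $(\bm x,\energy)\mapsto T(\bm x \concatenate \bm b,\energy)$ on the Banach space $\bbR^n$, vary only the first coordinate to see that the discriminant is affine in $x_1$ with slope $-[T(0^{\concatenate(n-1)}\bm b,\energy)]_{11}$, take $S$ to be the finite zero set of that monic polynomial, and apply Theorem~\ref{t:prerichImpliesRich}. The only difference is cosmetic: you make explicit the affine identification $\iota : \bbR^n \to Y$ and verify that it intertwines the word pseudometric and the cocycle, and you compute the degree of $[M(\energy)]_{11}$ directly from the leading term $\energy^L P$ rather than citing the standard fact, whereas the paper computes $AT(v,\energy)$ entrywise and references \cite[Proposition~2.2.5]{DF2022ESO} for monicity.
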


\begin{theorem} \label{t:prerichSchroExamplePolymer}
For any $n \in \bbN$, the set $\{x^{\concatenate n} : x \in \bbR\}\subseteq \bbR^n$ is \ample.
\end{theorem}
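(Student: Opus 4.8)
The plan is to reduce the statement to a single ``first-scale'' check by combining the Remark following the definition of \ample ness with Theorem~\ref{t:prerichImpliesRich}. Set $Y = \{x^{\concatenate n} : x \in \bbR\} \subseteq \bbR^n$. This is the diagonal line, hence a closed, perfect subset of $\bbR^n$; in fact it is a one-dimensional linear subspace, so $Y$ is itself a Banach space and $x \mapsto x^{\concatenate n}$ is a linear isomorphism $\bbR \to Y$. By the anti-homomorphism property \eqref{eq:TMantiHomom} together with \eqref{eq:AndefonXn} — equivalently, by Theorem~\ref{t:floquetmain} applied to repeated-block potentials — the transfer-matrix map attached to $Y$ is
\[ A : Y \times \bbR \to \SL(2,\bbR), \qquad A(x^{\concatenate n}, \energy) = T(x^{\concatenate n},\energy) = T(x,\energy)^n, \]
and more generally $A\big((x_1^{\concatenate n}) \concatenate \cdots \concatenate (x_p^{\concatenate n}), \energy\big) = T(x_p,\energy)^n \cdots T(x_1,\energy)^n$. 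By the Remark, it suffices to prove that $A$ is \great. Since, in the linear coordinate $x$ on $Y$, the entries of $A(x^{\concatenate n},\energy) = T(x,\energy)^n$ are polynomials in $(x,\energy)$, the map $A$ is analytic; hence by Theorem~\ref{t:prerichImpliesRich} it is enough to show that $A$ is \pregreat.

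Thus I must produce, for each $\energy \in \bbR$, a value $x \in \bbR$ with $T(x,\energy)^n$ hyperbolic. The relevant elementary fact is that $n$th powers preserve hyperbolicity in $\SL(2,\bbR)$: if $B \in \SL(2,\bbR)$ has $|\tr B| > 2$, then its eigenvalues are real numbers $\mu, \mu^{-1}$ with $|\mu| > 1$, so $B^n$ has eigenvalues $\mu^n, \mu^{-n}$, whence $\det B^n = 1$ and $|\tr B^n| = |\mu^n + \mu^{-n}| > 2$, i.e.\ $B^n$ is hyperbolic. So for a given $\energy$ it suffices to make $T(x,\energy)$ hyperbolic, and since $\tr T(x,\energy) = \energy - x$, it is enough to pick any $x$ with $|\energy - x| > 2$, say $x = \energy - 3$. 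Hence $A$ is \pregreat\ (indeed with empty exceptional set).

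Assembling the pieces: $A$ is analytic and \pregreat, so Theorem~\ref{t:prerichImpliesRich} gives that $A$ is \great, and then the Remark yields that $Y = \{x^{\concatenate n} : x \in \bbR\}$ is \ample, as claimed. I do not expect a substantive obstacle: the content of the theorem is precisely that restricting to ``polymer-type'' words — each potential value repeated $n$ times — does not obstruct the mechanism whereby \pregreat\ maps are \great, and the only points needing genuine care are (i) checking that the metric $d$ of \eqref{eq:unifMetricDefUneven} and the concatenation structure on $Y^\star$ correspond (up to a bi-Lipschitz change of coordinates) to those on $\bbR^\star$, so that admissible perturbations may again be taken of polymer type, and (ii) recording the trivial lemma in the previous paragraph. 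Everything else is bookkeeping, and in particular nothing specific to the value of $n$ causes difficulty.
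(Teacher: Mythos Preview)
Your proposal is correct and follows essentially the same route as the paper: reduce \ample ness to \great ness via the Remark, then to \pregreat ness via Theorem~\ref{t:prerichImpliesRich} using analyticity, and finally observe that $T(x^{\concatenate n},\energy) = T(x,\energy)^n$ together with the fact that $\tr T(x,\energy) = \energy - x$ can be made large. Your only superfluous worry is point (i): the concatenation and metric structures are already defined intrinsically on $Y^\star$ by the framework, so no comparison with $\bbR^\star$ is needed.
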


As mentioned above, once these theorems are proved, Theorems~\ref{t:main:sieve} and \ref{t:main:repeat} follow in short order.

\begin{proof}[Proof of Theorem~\ref{t:main:sieve}]
    Applying Theorem~\ref{t:prerichSchroExampleSieving} with $n=1$, $m=k-1$, and $\bm{b}={0}^{\concatenate(k-1)}$ shows that $Y_k:= \bbR \times \{0\}^{k-1}=\{x0^{\sharp(k-1)}: x \in \bbR\}$ is \ample. Since $\lambda Y_k = Y_k$ for all $\lambda \in \bbR^\times$, the conclusion follows by intersecting the dense $G_\delta$ sets obtained by applying Theorem~\ref{t:richToZHDcoupling} for each $k$.
\end{proof}

\begin{proof}[Proof of Theorem~\ref{t:main:repeat}]
    Similarly to the proof of Theorem~\ref{t:main:sieve}, this follows from Theorems~\ref{t:richToZHDcoupling} and \ref{t:prerichSchroExamplePolymer}.
\end{proof}

\subsection{Continuum Schr\"odinger Operators} \label{ssec:continuumops}
The framework of the current paper can be applied to other models.
To illustrate, we give a few applications to continuum Schr\"odinger operators on the real line.
Given $\varphi_1 \in L^2([0,a_1))$ and $\varphi_2 \in L^2([0,a_2))$, write
\begin{equation}
    (\varphi_1 \concatenate \varphi_2)(x)
    = \begin{cases}
        \varphi_1(x) & x \in [0,a_1) \\
        \varphi_2(x-a_1) & x \in [a_1,a_1+a_2)
    \end{cases}
\end{equation}
for their \emph{concatenation}.
More generally, for $\bm \varphi = (\varphi_j)_{j=1}^n$ with $\varphi_j \in L^2([0,a_j))$, write
\[ \bm \varphi^\aggregate = \varphi_1 \concatenate \cdots \concatenate \varphi_n \]
for the function in $L^2([0,\sum a_j))$ given by 
\[\bm\varphi^\aggregate(x)
=\varphi_j\left(x-\sum_{i=1}^{j-1}a_i\right) 
\text{ for } x \in \left[\sum_{i=1}^{j-1}a_i, \sum_{i=1}^{j} a_i\right).
\]
There are similar definitions of $\bm\varphi^\aggregate$ for $\bm\varphi = (\varphi_j)_{j=1}^\infty$ and $\bm \varphi = (\varphi_j)_{j \in \bbZ}$.

For a given $\varphi \in L^2([0,a))$ and $\energy \in \bbC$, the associated transfer matrix is given by $B(\varphi,\energy) = M(\varphi,\energy,a)$, where $M(\varphi,\energy,\cdot)$ is the solution to the initial value problem
\begin{equation} \label{eq:contSchro:IVP}
    \frac{\partial}{\partial x}M(\varphi,\energy,x)
    = \begin{bmatrix}
        0 & 1 \\
        \varphi(x)-\energy & 0
    \end{bmatrix}M(\varphi,\energy,x),
    \qquad M(\varphi,\energy,0) = \idty_{2 \times 2}.
\end{equation}

From the definitions, we note that
\begin{equation}
    B(\bm\varphi^\aggregate,\energy)
    = B(\varphi_n, \energy) \cdots B(\varphi_2,\energy)B(\varphi_1,\energy)
\end{equation}
and for the zero function in $L^2([0,a))$, one can explicitly solve the initial-value problem in \eqref{eq:contSchro:IVP} to obtain:
\begin{equation}
    B(0\cdot \chi_{_{[0,a)}}, \energy)
    = \begin{bmatrix}
        \cos(a\sqrt{E}) & \frac{\sin(a\sqrt{E})}{\sqrt{E}} \\
        -\sqrt{E} \sin (a\sqrt{E}) & \cos(a\sqrt{E})
    \end{bmatrix}.
\end{equation}

Similar to the discrete case, consider $Y$, a perfect subset of $L^2([0,a))$ for some $a>0$.
For $\bm y \in Y^\bbZ$, we write
\[ L_{\bm y} = -\partial_x^2 + \bm y^\aggregate \]
for the associated Schr\"odinger operator.
\begin{definition}
We say that a perfect set $Y \subseteq L^2([0,a))$ is \emph{\ample} with \emph{exceptional set} $S \subseteq \bbR$ if $S$ is discrete and for every $\energy \in \bbR \setminus S$, $\bm x \in Y^\star$, and $\varepsilon>0$, there exists $\bm y = \bm{y}(\energy,\bm{x},\varepsilon) \in Y^{\star}$ such that 
\begin{equation}
d(\bm{x}, \bm{y}) < \varepsilon
\quad \text{and}\quad
\energy \notin \spectrum(L_{\bm y}),
\end{equation}
where we recall $d$ is given by \eqref{eq:unifMetricDefUneven}.
\end{definition}

\begin{remark}
As an immediate consequence of the definitions and Floquet theory for continuum Schr\"odinger operators (see, e.g., \cite{Kuchment2016BAMS, Lukic2022book}) we have the following statement:
a set $Y \subseteq L^2([0,a))$ is \ample\ if and only if the map $Y \times \bbR \to \SL(2,\bbR)$ given by $(\varphi,\energy) \mapsto B(\varphi,\energy)$ is \great.
\end{remark}

\begin{theorem} \label{t:richToZHDcontinuum}
    If $Y \subseteq L^2([0,a))$ is \ample, then $\spectrum(L_{\bm y})$ is a Cantor set of zero Hausdorff dimension for generic $\bm y \in \LP(\bbZ,Y)$.
\end{theorem}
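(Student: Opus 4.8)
The plan is to follow the proof of Theorem~\ref{t:richToZHD} essentially line by line, after translating through the dictionary between the discrete and continuum transfer-matrix formalisms and making the bookkeeping adjustments forced by the fact that continuum Schr\"odinger spectra are unbounded above. Concretely, the role of the discrete transfer matrices $T(\cdot,\energy)$ is played by the continuum transfer matrices $B(\cdot,\energy)$ defined through \eqref{eq:contSchro:IVP}; the discriminant $\energy\mapsto\tr B(\bm x,\energy)$ is now entire rather than polynomial, so a $q$-periodic operator $L_{\bm x}$ with $\bm x\in Y^q$ has countably many spectral bands, only finitely many of which meet any compact window $[-R,R]$; and in place of $\ell^\infty$-small perturbations of a bounded discrete potential we use perturbations of the aggregated potential that are small in the uniformly-local $L^2$ norm. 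By the remark preceding the theorem (Floquet theory for continuum operators), $Y$ being \ample\ is equivalent to $B\colon Y\times\bbR\to\SL(2,\bbR)$ being \ample: for $\energy$ outside the discrete exceptional set $S$, any word in $Y^\star$ can be perturbed, arbitrarily little in the metric \eqref{eq:unifMetricDefUneven}, so that $\tr B$ leaves $[-2,2]$, i.e.\ so that a gap opens at $\energy$. This is the only structural property of the transfer-matrix family used in Section~\ref{sec:deducingThinness}.

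First I would establish that $\bm y\mapsto L_{\bm y}$ is continuous from $(\LP(\bbZ,Y),d)$ into the self-adjoint operators in the norm-resolvent topology. Since the range of a limit-periodic sequence is totally bounded, $\{\bm y_j:j\in\bbZ\}$ is bounded in $L^2([0,a))$, so $\bm y^\aggregate$ lies in the uniformly-local space $L^2_{\mathrm{unif}}(\bbR)$, with bounds uniform over $\bm y$ in a neighborhood of a given point; the difference of two such potentials that is small in $L^2_{\mathrm{unif}}$ is $(-\partial_x^2)$-form-bounded with small relative norm, whence the resolvents converge in operator norm by the resolvent identity. Continuous dependence of solutions of \eqref{eq:contSchro:IVP} on the coefficient in $L^1([0,a))\supseteq L^2([0,a))$ gives, in the same way, that $\bm y\mapsto B(\bm y,\cdot)$ is continuous into $C(\bbR,\SL(2,\bbR))$ uniformly on compact energy sets. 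Consequently $\bm y\mapsto\spectrum(L_{\bm y})$ is continuous with respect to Hausdorff distance on each $[-R,R]$; in particular the spectrum is upper semicontinuous.

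With continuity in hand I would run the usual Baire-category scheme. For $R,n\in\bbN$ let $\mathcal O_{R,n}$ be the set of $\bm y\in\LP(\bbZ,Y)$ for which $\spectrum(L_{\bm y})\cap[-R,R]$ admits a finite cover by open intervals $I_1,\dots,I_M$ with $\max_j|I_j|<1/n$ and $\sum_j|I_j|^{1/n}<1/n$; upper semicontinuity of the spectrum makes $\mathcal O_{R,n}$ open. For density, given $\bm y$ and $\delta>0$, approximate $\bm y$ within $\delta/2$ by a periodic $\bm x^{\concatenate\bbZ}$ with $\bm x\in Y^q$, and import the multiscale gap-opening argument of Section~\ref{sec:deducingThinness}: working on the window $[-R,R]\setminus S$, one opens gaps at a sufficiently fine net of energies — using that gaps are open conditions, so previously opened gaps survive later perturbations, together with the band-counting estimate on $[-R,R]$ — to produce $\bm x'\in Y^\star$ with $d(\bm x,\bm x')<\delta/2$ and $\spectrum(L_{\bm x'})\cap[-R,R]$ so finely subdivided that $(\bm x')^{\concatenate\bbZ}\in\mathcal O_{R,n}$. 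Intersecting over $R,n\in\bbN$ yields a dense $G_\delta$ set; for $\bm y$ in it, the covering furnished by $\mathcal O_{R,n}$ shows $\Hausdorff^s(\spectrum(L_{\bm y})\cap[-R,R])=0$ for every $s>0$ and every $R$, and countable stability of Hausdorff dimension then gives $\dim_{\Hausdorff}\spectrum(L_{\bm y})=0$, hence zero Lebesgue measure, hence empty interior and total disconnectedness. That the spectrum is moreover perfect — so that, being closed, it is a Cantor set — follows exactly as in the proof of Theorem~\ref{t:richToZHD}.

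I expect the main obstacle to be technical rather than conceptual: transporting the gap-opening and band-counting machinery of Section~\ref{sec:deducingThinness} to a discriminant that is entire rather than polynomial, with infinitely many zeros, and to a spectrum that is unbounded above. One must everywhere localize to compact energy windows, keep the exceptional set $S$ fixed throughout the construction, and ensure that the band count entering the thinness estimate is the (finite) number of bands meeting $[-R,R]$ and that it grows slowly enough relative to the contraction of band lengths produced by the gap-opening step. The continuity statements in the second paragraph are the other point requiring care, but they are standard facts about one-dimensional Schr\"odinger operators with uniformly-local $L^2$ potentials.
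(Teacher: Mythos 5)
Your high-level plan — replay the discrete arguments of Section~\ref{sec:deducingThinness} with $B$ in place of $T$, localize to compact windows $[-R,R]\setminus S$ to compensate for the unbounded spectrum, and run a Baire category argument via the open sets $\mathcal O_{R,n}$ — is the paper's approach; the continuum theorem is proved by reduction to the discrete machinery, and your localization essentially reproduces the sets $F_\ell$ used in the proof of Theorem~\ref{t:richToZHDcoupling}. However, your characterization of the density step as ``opening gaps at a sufficiently fine net of energies'' understates what is actually needed. A fine collection of gaps does not by itself force $\sum_j |I_j|^\alpha$ to be small for small $\alpha$; you need the bands between gaps to be exponentially short in the period. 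The mechanism that delivers this is quantitative: one concatenates long blocks $(\bm c_j)^{\concatenate u}$ whose resolvent sets cover $K$, gets $\|B(\lambda \bm c_j^{\concatenate u},\energy)\| \gtrsim \exp(uL_{\min})$ off the corresponding spectrum, and then invokes the identity expressing $\partial_\energy\ids$ as an average of $\|M(\cdot,\energy)\|_2^2$ over cyclic shifts to force $\partial_\energy \ids \gtrsim e^{cN}$ on $K \cap \spectrum(L_{\lambda\bm y})$, so that each band in $K$ has length $\lesssim e^{-cN}$ by \eqref{eq:DOSbandmeas}. Your proposal should name this IDS-derivative/Lyapunov estimate explicitly rather than describing it as a net of gap-openings, since without it the $\mathcal O_{R,n}$ density claim does not follow.

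The second, and more substantive, gap is the one new technical point the continuum case actually requires, which you do not anticipate: the continuum analogue of \eqref{eq:dkdEviaconjugacy} (see \cite[Lemma~2.1]{DamFilLuk2017JST}) expresses $\partial_\energy\ids$ as an \emph{integral} over the continuous family of cyclic translates of the period cell, not a finite sum over lattice shifts. To extract a uniform lower bound on this integral from the pointwise lower bound on $\|M(\cdot,\energy)\|_2$ at the two specific base points singled out by the concatenation argument, one must control how $M$ varies as the base point of the cyclic shift moves continuously; this is done with Sobolev-type a priori estimates on the conjugacy matrices, as carried out in \cite{DamFilLuk2017JST}. You correctly flag the entire discriminant and the unboundedness of the spectrum as technical issues, but both are handled by your compact-window localization and do not require new ideas; the continuous-cyclic-shift issue is the genuine modification the paper highlights, and it is missing from your proposal. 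The remaining scaffolding you supply (norm-resolvent continuity for $L^2_{\mathrm{unif}}$-small perturbations, the $G_\delta$ structure, and absence of isolated points via strict ergodicity of the hull) is consistent with what the paper does.
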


As before; in the event that $\lambda Y$ is \ample\ for a collection of $\lambda $'s with a uniform exceptional set, one can incorporate a coupling constant.

\begin{theorem} \label{t:richToZHDcouplingcontinuum}
    Suppose $S\subseteq \bbR$ is discrete, $Y \subseteq L^2([0,a))$ is perfect, and $Z \subseteq \bbR^\times$ is $\sigma$-compact. 
    If  $\lambda Y$ is \ample\ with exceptional set $S$ for every $\lambda  \in Z$, then $\spectrum(L_{\lambda \bm y})$ is a Cantor set of zero Hausdorff dimension for generic $\bm y \in \LP(\bbZ,Y)$ and all $\lambda \in Z$.
\end{theorem}

We conclude with two examples of \ample\ sets in $L^2$.

\begin{theorem} \label{t:continuum:sieve}
    Fix $a,b>0$, and choose $\psi \in L^2([0,b))$. 
    The set $\{\varphi \concatenate \psi : \varphi \in L^2([0,a))\}$ is \ample\ in $L^2([0,a+b))$.
\end{theorem}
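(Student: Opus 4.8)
The plan is to deduce this from Theorem~\ref{t:prerichImpliesRich} together with the Remark characterizing \ample\ sets, in direct analogy with the discrete result Theorem~\ref{t:prerichSchroExampleSieving}. Write $X := L^2([0,a))$, a real Banach space, and let $\iota\colon X\to L^2([0,a+b))$ be the affine embedding $\iota(\varphi) = \varphi\concatenate\psi$, whose image $Y := \{\varphi\concatenate\psi : \varphi\in X\}$ is the set in question; as a closed affine copy of the infinite-dimensional space $X$, it is a perfect closed subset of $L^2([0,a+b))$. By the Remark following the continuum definition of \ample\ sets, it suffices to show that the map $Y\times\bbR\to\SL(2,\bbR)$, $(\chi,\energy)\mapsto B(\chi,\energy)$, is \great. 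I will obtain this by applying Theorem~\ref{t:prerichImpliesRich} to the precomposition $\tilde A\colon X\times\bbR\to\SL(2,\bbR)$, $\tilde A(\varphi,\energy) := B(\iota(\varphi),\energy) = B(\psi,\energy)\,B(\varphi,\energy)$, checking that $\tilde A$ is analytic and \pregreat, and then transferring \great ness back along $\iota$.

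First, analyticity. The matrix $B(\varphi,\energy) = M(\varphi,\energy,a)$ is the value at $x=a$ of the solution of \eqref{eq:contSchro:IVP}, whose coefficient matrix depends affinely on $(\varphi,\energy)\in X\times\bbR$; the Dyson--Volterra series for this solution has $n$-th term a bounded homogeneous polynomial of degree $n$ in $(\varphi,\energy)$, and the series converges locally uniformly (it is dominated by $\exp(\sqrt{a}\,\|\varphi\|_{L^2}+a|\energy|)$ via Cauchy--Schwarz), so $B$ is analytic on $X\times\bbR$ in the sense of Federer. Since $\energy\mapsto B(\psi,\energy)$ is analytic (the same argument with $\varphi=\psi$ held fixed) and matrix multiplication is bilinear, $\tilde A$ is analytic.

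The heart of the matter is \pregreat ness, and in fact the exceptional set can be taken empty. Fix $\energy\in\bbR$ and probe with the constant potentials $\varphi_t := (\energy+t^2)\chi_{[0,a)}\in X$, $t>0$. Adding a constant to the potential shifts the spectral parameter, so $B(\varphi_t,\energy)$ equals the free transfer matrix at energy $-t^2$, which by the explicit formula in the excerpt is the strongly hyperbolic matrix
\[
Q_t \;=\; \begin{bmatrix} \cosh(at) & t^{-1}\sinh(at) \\ t\sinh(at) & \cosh(at)\end{bmatrix}, \qquad \tr Q_t = 2\cosh(at) > 2.
\]
Writing $P := B(\psi,\energy) = (p_{ij})\in\SL(2,\bbR)$, a direct computation gives $\tr(P Q_t) = (p_{11}+p_{22})\cosh(at) + (p_{12}\,t + p_{21}\,t^{-1})\sinh(at)$. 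If $p_{12}\neq 0$, the term $p_{12}\,t\sinh(at)$ dominates and $|\tr(P Q_t)|\to\infty$ as $t\to\infty$. If $p_{12}=0$, then $\det P = 1$ forces $p_{11}p_{22}=1$ with $p_{11}\neq 0$, so $p_{11}+p_{22} = p_{11}+p_{11}^{-1}$ has modulus $\geq 2$ and in particular is nonzero, whence again $\tr(P Q_t)\sim (p_{11}+p_{22})\cosh(at)$ has modulus tending to $\infty$. In either case $\tilde A(\varphi_t,\energy) = P Q_t$ is hyperbolic for all sufficiently large $t$, so $\tilde A$ is \pregreat\ with empty exceptional set.

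By Theorem~\ref{t:prerichImpliesRich}, $\tilde A$ is \great. It remains to transfer this along $\iota$. A word $\bm\chi = \chi_1\cdots\chi_m\in Y^\star$ equals $(\iota(\varphi_1),\ldots,\iota(\varphi_m))$ for a unique word $\bm\varphi=\varphi_1\cdots\varphi_m\in X^\star$; by the anti-homomorphism property \eqref{eq:TMantiHomom} one has $B(\bm\chi,\energy) = \tilde A(\bm\varphi,\energy)$, and since appending the fixed block $\psi$ after each block of a word does not change the $L^2$-distance between the $j$-th blocks of any two periodic extensions, $\iota$ preserves the metric \eqref{eq:unifMetricDefUneven}: $d(\bm\chi,\bm\chi')=d(\bm\varphi,\bm\varphi')$. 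Hence the fact that $\tilde A$ is \great\ passes verbatim to $B|_{Y\times\bbR}$: given $\energy\in\bbR$, $\bm\chi\in Y^\star$, and $\varepsilon>0$, apply \great ness of $\tilde A$ to $\bm\varphi$ to get $\bm\eta\in X^\star$ with $d(\bm\varphi,\bm\eta)<\varepsilon$ and $\tilde A(\bm\eta,\energy)$ hyperbolic, and take $\bm\zeta := (\iota(\eta_j))_j\in Y^\star$. Thus $Y$ is \ample. The only genuinely delicate steps are the choice of the probing family $\varphi_t$ (everything downstream is then forced) and, to a lesser degree, the $p_{12}=0$ case in the trace estimate; the bookkeeping for the metric under $\iota$ is immediate because $\iota$ is an isometry onto its image.
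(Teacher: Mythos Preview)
Your proof is correct and follows essentially the same route as the paper's: both apply Theorem~\ref{t:prerichImpliesRich} to the map $(\varphi,\energy)\mapsto B(\varphi\concatenate\psi,\energy)$ on $L^2([0,a))\times\bbR$, verify \pregreat ness (with empty exceptional set) by testing on constant potentials whose value is pushed to $+\infty$, and read off that the trace diverges. Your write-up is simply more explicit in two places---the isometric identification $\iota:X\to Y$ needed to pass between \great ness of $\tilde A$ and \ample ness of $Y$, and the case split $p_{12}\neq 0$ versus $p_{12}=0$ (where the paper just says ``by considering the different possibilities for $\alpha,\ldots,\delta$'')---but these are elaborations of the same argument, not a different one.
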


\begin{theorem} \label{t:continuum:repeat}
    Let $a>0$ and $n \in \bbN$ be given, and consider $X = L^2([0,a))$.
    The set $\{\varphi^{\concatenate n} : \varphi \in L^2([0,a))\}$ is \ample\ in $L^2([0,na))$.
\end{theorem}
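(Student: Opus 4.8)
The plan is to reduce everything to Theorem~\ref{t:prerichImpliesRich} by identifying the set of $n$-fold repeated words with the Banach space $X := L^2([0,a))$. First I would introduce the linear map $\iota \colon X \to L^2([0,na))$, $\iota(\varphi) = \varphi^{\concatenate n}$, note that $\|\iota(\varphi)\|_{L^2([0,na))}^2 = n\,\|\varphi\|_X^2$, and deduce that the image $Y := \{\varphi^{\concatenate n} : \varphi \in X\}$ is a closed perfect subset of $L^2([0,na))$ and that $\iota$ is a bijection of $X$ onto $Y$ which scales distances by $\sqrt n$. Since $\iota$ is length-preserving on words, it induces a bijection $X^\star \to Y^\star$; using \eqref{eq:unifMetricDefUneven} I would check that this bijection also scales the word metric $d$ by $\sqrt n$, and that — because $B(\varphi^{\concatenate n},\energy) = B(\varphi,\energy)^n$ — it intertwines the transfer-matrix map of $Y$ with the map $\tilde A \colon X \times \bbR \to \SL(2,\bbR)$, $\tilde A(\varphi,\energy) := B(\varphi,\energy)^n$, extended to $X^\star$ via \eqref{eq:TMantiHomom}. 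By the remark preceding the statement, $Y$ is \ample\ if and only if $(\varphi,\energy) \mapsto B(\varphi,\energy)$ is \great\ on $Y$, and by the preceding observations this holds if and only if $\tilde A$ is \great; so the task reduces to showing that $\tilde A$ is \great.

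I would obtain this from Theorem~\ref{t:prerichImpliesRich} once I verify that $\tilde A$ is analytic and \pregreat. For analyticity, I would use that $B(\varphi,\energy) = M(\varphi,\energy,a)$, the solution of \eqref{eq:contSchro:IVP}, is given by its everywhere-convergent Peano--Baker (Dyson) series; writing the coefficient matrix of \eqref{eq:contSchro:IVP} as $A_0(\energy) + \varphi(x)P$ with $A_0(\energy)$ affine in $\energy$ and $P$ constant, and integrating out the variables carrying no factor of $\varphi$, exhibits the $k$-th term as a sum of bounded multilinear forms in $\varphi \in X$ with polynomial coefficients in $\energy$ (here one uses $L^2([0,a)) \hookrightarrow L^1([0,a))$), so $B$, and hence $\tilde A = B^n$, is analytic $X \times \bbR \to \SL(2,\bbR)$. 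To see that $\tilde A$ is \pregreat, fix $\energy \in \bbR$ and take $\varphi \equiv \energy + 1$ on $[0,a)$: then the coefficient matrix in \eqref{eq:contSchro:IVP} is the constant matrix with rows $(0,1)$ and $(1,0)$, whence $B(\varphi,\energy) = \cosh(a)\,\idty + \sinh(a)\bigl[\begin{smallmatrix}0&1\\1&0\end{smallmatrix}\bigr]$ has trace $2\cosh a > 2$ and is therefore hyperbolic, with real eigenvalues $\mu, \mu^{-1}$, $|\mu| > 1$; then $\tilde A(\varphi,\energy) = B(\varphi,\energy)^n$ has eigenvalues $\mu^n, \mu^{-n}$ and trace $\mu^n + \mu^{-n}$ of absolute value exceeding $2$, so it too is hyperbolic. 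Thus $\tilde A$ is \pregreat\ with empty exceptional set, Theorem~\ref{t:prerichImpliesRich} gives that $\tilde A$ is \great, and by the first paragraph $Y$ is \ample.

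I expect the only real friction to be bookkeeping: carefully checking that $\iota$ intertwines words, the word metric \eqref{eq:unifMetricDefUneven}, and the extended transfer map, together with the routine-but-slightly-technical verification that $B$ is genuinely analytic as a function of the Banach-space variable $\varphi$ (for which one works in $L^1([0,a))$ and uses the Dyson expansion). The conceptual input is immediate: a constant potential already produces a hyperbolic transfer matrix, and powers of hyperbolic matrices are hyperbolic, so $\tilde A$ is trivially \pregreat, after which the heavy lifting is done by Theorem~\ref{t:prerichImpliesRich}.
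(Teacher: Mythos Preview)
Your proposal is correct and follows essentially the same approach as the paper: verify that the relevant transfer-matrix map is analytic and \pregreat\ (by exhibiting a constant potential yielding a hyperbolic matrix, whose $n$th power remains hyperbolic), then invoke Theorem~\ref{t:prerichImpliesRich}. The paper's proof is terser---it works directly on $Y$ (a closed subspace, hence a Banach space) rather than pulling back along $\iota$, cites \cite{PoschelTrubowitz1987} for analyticity rather than sketching the Dyson expansion, and uses the constant $\lambda$ with $\lambda>\energy$ instead of your $\energy+1$---but the argument is the same.
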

Let us conclude by emphasizing that the criteria espoused in this paper help to establish one of the main model-dependent parts in existing proofs of zero-measure spectrum and hence can be fruitfully applied in other scenarios.
To avoid many repetitive theorem statements, we do not write all of these out explicitly.
However, we point out that the other model-dependent aspects have been worked out for Jacobi matrices \cite{DamFilWan2023MANA} and CMV matrices \cite{FilOng2017JFA}, so one can easily obtain results in those settings as well.

\section{One-Parameter Families} \label{sec:oneparam}

The next main objective is to verify the simple sufficient criterion from Theorem~\ref{t:prerichImpliesRich}.
We begin with a few preparatory results.
Throughout the section, we assume that $X$ is a Banach space.

\begin{lemma} \label{lem:oneparam1}
    If $B_1 \in \SL(2,\bbR)$ is elliptic and $B_2 \in \SL(2,\bbR)$ is hyperbolic, then $B_1$ and $B_2$ do not commute.
\end{lemma}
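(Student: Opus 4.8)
The plan is to argue by contradiction using simultaneous triangularization / eigenspace invariance. Suppose $B_1 B_2 = B_2 B_1$. Since $B_2$ is hyperbolic, it has two distinct real eigenvalues $\mu$ and $\mu^{-1}$ with $|\mu| \neq 1$, hence two distinct one-dimensional eigenspaces $L_+, L_- \subseteq \bbR^2$. A standard fact is that if $B_1$ commutes with $B_2$, then $B_1$ must preserve each eigenspace of $B_2$: indeed, for $v \in L_{\pm}$ we have $B_2(B_1 v) = B_1(B_2 v) = \mu^{\pm 1}(B_1 v)$, so $B_1 v$ again lies in the $\mu^{\pm 1}$-eigenspace, which is one-dimensional. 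Therefore $L_+$ and $L_-$ are both invariant under $B_1$, so in the basis diagonalizing $B_2$ the matrix $B_1$ is also diagonal, say $B_1 = \mathrm{diag}(\alpha, \beta)$ with $\alpha\beta = \det B_1 = 1$.

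Now I would derive the contradiction from the trace condition on $B_1$. Writing $\beta = \alpha^{-1}$, ellipticity gives $|\tr B_1| = |\alpha + \alpha^{-1}| < 2$. But $\alpha \in \bbR^\times$ (the eigenspaces of $B_2$ are real, and $B_1$ is a real matrix preserving them, so its eigenvalues on those lines are real), and for any real $\alpha \neq 0$ one has $|\alpha + \alpha^{-1}| \geq 2$ by the AM–GM inequality (equivalently $(\,|\alpha|^{1/2} - |\alpha|^{-1/2})^2 \geq 0$). This contradicts $|\tr B_1| < 2$, so $B_1$ and $B_2$ cannot commute.

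One subtlety worth being careful about: I should make sure the eigenvalues of $B_2$ really are distinct so that the two eigenspaces are genuinely different one-dimensional subspaces — this is exactly where $|\tr B_2| > 2$ (rather than $= 2$) is used, since it forces the discriminant of the characteristic polynomial $\lambda^2 - \tr(B_2)\lambda + 1$ to be strictly positive. A second point: I want $B_1$ to act as a real scalar on each line $L_\pm$, which is automatic since $L_\pm$ is a real line and $B_1$ is a real linear map. The rest is the elementary inequality $|\alpha + 1/\alpha| \geq 2$.

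The main obstacle is essentially bookkeeping rather than conceptual: one must phrase the "commuting implies common eigenbasis" step cleanly in the $2\times 2$ real setting without accidentally invoking complex eigenvectors (which would be irrelevant here since $B_2$'s eigenvalues are already real). An alternative, even shorter route avoids eigenspaces entirely: diagonalize $B_2 = P\,\mathrm{diag}(\mu,\mu^{-1})\,P^{-1}$ and note that a $2\times 2$ matrix commuting with a diagonal matrix with distinct diagonal entries must itself be diagonal; then conjugating back, $P^{-1}B_1 P$ is diagonal with real determinant-one diagonal, and apply $|\alpha + \alpha^{-1}| \geq 2$. I would likely present this second version as it is the most self-contained.
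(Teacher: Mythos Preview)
Your proof is correct and follows essentially the same idea as the paper's: both exploit the incompatibility between the eigenstructure of an elliptic matrix (complex conjugate eigenvalues/eigenvectors) and a hyperbolic matrix (distinct real eigenvalues/eigenvectors). The paper's version is terser---it simply observes that the two matrices cannot be simultaneously diagonalized since one has a real eigenbasis and the other a genuinely complex one---whereas you spell out the invariance-of-eigenspaces step explicitly and finish with the inequality $|\alpha+\alpha^{-1}|\ge 2$; your added care about why commuting forces $B_1$ to be diagonal in $B_2$'s real eigenbasis is a nice touch that the paper leaves implicit.
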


\begin{proof}
    The eigenbasis of $B_1$ can be chosen to consist of a pair of linearly independent complex conjugates, whereas the eigenvectors of $B_2$ can be chosen to be real valued. 
    Thus, $B_1$ and $B_2$ may not be simultaneously diagonalized.
    \end{proof}

    \begin{lemma}\label{lem:oneparam2}
      Suppose $A:X \times \bbR \to \SL(2,\bbR)$ is analytic and \pregreat\ with exceptional set $S$. For every $\energy \in \bbR \setminus S$, $x \in X$, and $\varepsilon >0$, there exists $k \in \bbN$ and $\bm y \in X^k$ such that $d(x,\bm y)< \varepsilon$ and $A(\bm y, \energy)$ is hyperbolic.
    \end{lemma}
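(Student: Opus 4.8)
The plan is to use analyticity to reduce the assertion to a statement about the semigroup generated by the matrices $A(w,E)$ with $w$ ranging over the $\varepsilon$-ball about $x$, and then to build a hyperbolic element of that semigroup out of two elliptic rotations about distinct points of the hyperbolic plane. Note first that, by \eqref{eq:unifMetricDefUneven}, the condition $d(x,\bm y)<\varepsilon$ for a word $\bm y=y_1\cdots y_k$ merely says that every letter $y_j$ lies in the open ball $U:=\{w\in X: d(w,x)<\varepsilon\}$. First I would dispose of the trivial cases: if $A(x,E)$ is hyperbolic, take $\bm y=x$. Otherwise I would set $f:=\tr A(\cdot,E)\colon X\to\bbR$, which is analytic since $\tr$ is linear and $w\mapsto(w,E)$ is affine. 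As $A$ is \pregreat\ there is $z\in X$ with $|f(z)|>2$, while $|f(x)|\le2$; hence $f$ is non-constant, and by the identity theorem for analytic functions on the connected Banach space $X$ (see \cite[Section~3.1.24]{Federer1969GMTBook}) it is non-constant on every nonempty open subset of $X$. If $\sup_{w\in U}|f(w)|>2$, one finishes by taking $\bm y=w$ for a suitable $w\in U$, so one may assume that every $A(w,E)$ with $w\in U$ is non-hyperbolic. Since $U$ is convex, hence connected, and $f|_U$ is non-constant, $|f|\not\equiv2$ on $U$, so $U':=\{w\in U:|f(w)|<2\}$ is a nonempty open subset of $X$ on which $A(\cdot,E)$ takes only elliptic values.

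The crux is then to produce $w_1,w_2\in U'$ whose elliptic matrices $B_i:=A(w_i,E)$ are rotations about \emph{distinct} points $p_1\neq p_2$ of the hyperbolic plane $\mathbb H$, with rotation angles that are irrational multiples of $2\pi$. First, $A(\cdot,E)$ cannot fix a common point $p\in\mathbb H$ throughout $U'$: the requirement that a matrix fix $p$ is the vanishing of two functions that are affine-linear in its entries, so if these vanished on the open set $U'$ they would, by the identity theorem, vanish on all of $X$, placing $A(X,E)$ in the compact group $\mathrm{Stab}(p)$ and forcing $|f|\le2$ on $X$, contrary to $|f(z)|>2$. Thus the fixed point $p(w)$ of $A(w,E)$ is non-constant over $w\in U'$; it, and the rotation angle $\theta(w)\in(0,2\pi)$ (characterized by $2\cos(\theta(w)/2)=f(w)$), depend continuously on $w\in U'$. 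Because $f|_{U'}$ is non-constant analytic, so is $\theta$, and a non-constant real-analytic function has an interval in its range; hence there is $w_1\in U'$ with $\theta(w_1)/2\pi$ irrational. The set $\{w\in U':p(w)=p(w_1)\}$ is then a proper closed subset of $U'$, and on its nonempty open complement $f$ is still non-constant, so one may pick $w_2$ there with $\theta(w_2)/2\pi$ irrational.

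To conclude: since each $B_i$ is a rotation about $p_i$ through an angle incommensurable with $2\pi$, the powers $\{B_i^n:n\ge1\}$ are dense in the circle group $\mathrm{Stab}(p_i)$, so $\{B_1^aB_2^b:a,b\ge1\}$ is dense in $\mathrm{Stab}(p_1)\,\mathrm{Stab}(p_2)$. This product set contains the composition of the half-turns (rotations by $\pi$) about $p_1$ and about $p_2$, which is a hyperbolic translation along the geodesic through $p_1$ and $p_2$, with trace of modulus $2\cosh\bigl(\dist_{\mathbb H}(p_1,p_2)\bigr)>2$. As the hyperbolic matrices form an open subset of $\SL(2,\bbR)$, some $B_1^aB_2^b$ with $a,b\ge1$ is hyperbolic; then $\bm y:=w_2^{\concatenate b}\concatenate w_1^{\concatenate a}$ does the job, since all its letters lie in $\{w_1,w_2\}\subseteq U$ (so that $d(x,\bm y)<\varepsilon$) while $A(\bm y,E)=B_1^aB_2^b$ is hyperbolic.

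The step I expect to be the main obstacle is the second one: deriving, from ``$A(\cdot,E)$ non-hyperbolic on all of $U$'' together with analyticity and the \pregreat\ hypothesis, the existence of two elliptic matrices with distinct fixed points (as opposed to a whole family sitting inside a single conjugate of $\SO(2)$). Everything there hinges on propagating local algebraic constraints on $A(\cdot,E)$ out to all of $X$ via the identity theorem, and one must check carefully that the constraints invoked (fixing a given point of $\mathbb H$; later, incommensurability of a rotation angle) are genuinely analytic and stable under the relevant perturbations, and that the subsets one restricts to ($U'$, and the complement of $\{p(\cdot)=p(w_1)\}$) are indeed open and nonempty.
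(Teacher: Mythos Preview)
Your argument is correct. The reduction to words with all letters in the ball $U$ is exactly right, and the worry you flag at the end is unfounded: the constraint ``$A(w,E)$ fixes $p$'' is indeed the vanishing of two real-analytic functions of $w$ (the real and imaginary parts of $c\,p^{2}+(d-a)p-b$ with $a,b,c,d$ the entries of $A(w,E)$), so vanishing on the nonempty open set $U'$ forces vanishing on the connected space $X$ by the identity theorem, and the contradiction with the existence of a hyperbolic $A(z,E)$ goes through. For the irrational-angle selection, note that one should pass to a line segment inside a small ball in $U'$ (respectively in $V=U'\setminus\{p(\cdot)=p(w_1)\}$) on which $f$ is non-constant; then the intermediate value theorem gives an interval of trace values in $(-2,2)$, and only countably many of those correspond to rational multiples of $\pi$. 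The half-turn computation is a standard hyperbolic-geometry fact and your density argument is fine.

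The paper's proof reaches the same conclusion by a shorter route that differs in two places. First, instead of analyzing fixed points directly, it observes via Lemma~\ref{lem:oneparam1} that the (matrix) commutator $[A(y,E),A(y',E)]$ cannot vanish identically on $X\times X$ (since $A(x,E)$ elliptic and $A(y_0,E)$ hyperbolic do not commute), and then uses the identity theorem on that analytic map to produce two \emph{non-commuting} elliptics $A(x_1,E),A(x_2,E)$ with $x_1,x_2$ close to $x$. This is equivalent to your ``distinct fixed points'' condition, but avoids any discussion of the fixed-point map. Second, the paper outsources the endgame to \cite{EFGL2022JFA}, quoting that the semigroup generated by two non-commuting elliptic elements of $\SL(2,\bbR)$ contains a hyperbolic element; you instead prove this in place via the half-turn trick. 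So your proof is more self-contained and geometric, while the paper's is terser by leaning on Lemma~\ref{lem:oneparam1} and the cited black box.
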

    
    \begin{proof} Let $\energy$, $x$, and $\varepsilon$ be given.  By definition, note that there exists $y_0$ such that $A(y_0,\energy)$ is hyperbolic.
    
\underline{Case 1:    $A(x,\energy)$ is hyperbolic.} In this case, the desired conclusion is satisfied with $k=1$ and $\bm y =x$.
    \medskip
    
\underline{Case 2:  $A(x,\energy)$ is elliptic.}
Then Lemma~\ref{lem:oneparam1} implies that $A(x,\energy)$ and $A(y_0,\energy)$ do not commute and therefore the map $X \times X \to \liesl(2,\bbR)$ given by
    \[ (y,y') \mapsto [A(y,\energy), A(y',\energy)] \]
    does not vanish identically and hence cannot vanish identically on any open subset of $X \times X$.
    Thus, we can choose $x_1,x_2 \in X$ within $\varepsilon$ of $x$ such that $A(x_1,\energy)$ and $A(x_2,\energy)$ are elliptic and do not commute.
    By \cite[Corollary~3.3]{EFGL2022JFA}, the semigroup generated by $\{A(x_1,\energy), A(x_2,\energy)\}$ contains a hyperbolic element. 
    Since each element of the semigroup generated by those matrices has the form $A(\bm x,\energy)$ for some $\bm x \in \{x_1,x_2\}^n$, the result follows in this case.
    \medskip
    
    \underline{Case 3: $|\tr A(x,\energy)| = \pm 2$.} Since $|\tr A(y_0,\energy)|>2$, it follows that $x \mapsto \tr A(x,\energy)$ is a nonconstant analytic function of $x$. 
    Thus, a small wiggle of $x$ puts us back into one of the previous two cases.
\end{proof}

\begin{lemma} \label{lem:oneparam3}
  Suppose $A:X \times \bbR$ is \pregreat\ with exceptional set $S$. 
For every $k \in \bbN$, the map $A_k:X^k \times \bbR \to \SL(2,\bbR)$ given by \eqref{eq:AndefonXn} is also \pregreat\ with the same exceptional set.
\end{lemma}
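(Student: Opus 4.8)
The plan is to prove this directly by exhibiting, for each fixed $\energy \in \bbR \setminus S$, a single word $\bm y \in X^k$ with $A_k(\bm y,\energy)$ hyperbolic; since we reuse the very same discrete set $S$, nothing more is required by the definition of \pregreat. Because $A$ is \pregreat\ with exceptional set $S$, for the given $\energy$ there is some $y_0 = y_0(\energy) \in X$ such that $B := A(y_0,\energy)$ is hyperbolic. I would then take $\bm y = y_0^{\concatenate k} \in X^k$, the word all of whose $k$ letters equal $y_0$; by the definition \eqref{eq:AndefonXn} of $A_k$ (the ordering of the factors is immaterial here since all letters coincide) this gives $A_k(\bm y,\energy) = B^k$.

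It then suffices to recall the elementary fact that a positive integer power of a hyperbolic element of $\SL(2,\bbR)$ is again hyperbolic. Indeed, a hyperbolic $B \in \SL(2,\bbR)$ has two distinct real eigenvalues $\lambda$ and $\lambda^{-1}$, and necessarily $|\lambda| \neq 1$ (otherwise $\lambda \in \{1,-1\}$ would force $\lambda^{-1} = \lambda$, contradicting distinctness). Hence $B^k$ has eigenvalues $\lambda^k$ and $\lambda^{-k}$, which are real, of the same sign (their product is $1$), and satisfy $|\lambda^k| \neq 1$, so $|\tr(B^k)| = |\lambda|^k + |\lambda|^{-k} > 2$ (using $t + t^{-1} > 2$ for every $t>0$ with $t \neq 1$). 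Therefore $A_k(\bm y,\energy) = B^k$ is hyperbolic. As this holds for every $\energy \in \bbR \setminus S$ and $S$ is discrete, $A_k$ is \pregreat\ with exceptional set $S$, as claimed.

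I expect no genuine obstacle here: the statement is essentially routine. The only points that call for mild care are the bookkeeping with the anti-homomorphism convention \eqref{eq:TMantiHomom}--\eqref{eq:AndefonXn} (harmless for a constant word, as all matrix factors are identical) and the short eigenvalue computation above. In particular, and in contrast to the passage from \pregreat\ to \great, no perturbation argument is needed: one only needs \emph{some} hyperbolic word at the prescribed energy, not one that is $\varepsilon$-close to a prescribed word, so neither Lemma~\ref{lem:oneparam1} nor Lemma~\ref{lem:oneparam2} is invoked. (This lemma is presumably then combined with Lemma~\ref{lem:oneparam2}, applied to $A_k$ in place of $A$, to finish the proof of Theorem~\ref{t:prerichImpliesRich}.)
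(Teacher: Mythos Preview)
Your proof is correct and follows the same approach as the paper: pick $y_0$ with $A(y_0,\energy)$ hyperbolic and observe that $A_k(y_0^{\concatenate k},\energy) = A(y_0,\energy)^k$ is again hyperbolic. The paper simply asserts the last step, whereas you spell out the eigenvalue computation, but the argument is otherwise identical.
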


\begin{proof}
Since $A$ is \pregreat\ with exceptional set $S$, we may choose for each $\energy \in \bbR \setminus S$ an $x \in X$ such that $ A(x,\energy)$ is hyperbolic.
Applying \eqref{eq:TMantiHomom} to this $x$ and $\energy$, $ A(x^{\concatenate k},\energy) = A(x,\energy)^k$, which is also hyperbolic. 
Thus, the range of $A_k(\cdot, \energy)$ contains a hyperbolic element for every $E \in \bbR \setminus S$,
which concludes the proof.
\end{proof}

\begin{proof}[Proof of Theorem~\ref{t:prerichImpliesRich}]
Assume $A: X \times \bbR \to \SL(2,\bbR)$ is \pregreat\ with exceptional set $S$.
By Lemma~\ref{lem:oneparam3}, $A_k$ is also \pregreat\  with the same exceptional set $S$, so the desired conclusion then follows by applying Lemma~\ref{lem:oneparam2} to $A_k$.
\end{proof}

Let us now conclude with proofs that the various examples discussed earlier in the paper are themselves \great.

\begin{proof}[Proof of Theorem~\ref{t:prerichSchroExampleSieving}]
Let $m$, $\bm{b} \in \bbR^m$, and $n$ be given, put $X = \bbR^n$, and consider the map $A:X \times \bbR \to \SL(2,\bbR)$ given by
\begin{equation}
    A(\bm{x}, \energy)
    = T(\bm{x} \sharp \bm{b}, \energy),
\end{equation}
where $T$ is the Schr\"odinger transfer matrix map (given by \eqref{eq:TvEDefin} and extended to $X\times \bbR$ via \eqref{eq:TMantiHomom}).
In light of Theorem~\ref{t:prerichImpliesRich}, it suffices to show that $A$ is \pregreat.

Consider $M= (M_{ij}) \in \SL(2,\bbR)$, 
and note that
\[ M T(v,\energy) =
\begin{bmatrix} M_{11} & M_{12} \\ M_{21} & M_{22} \end{bmatrix}
\begin{bmatrix} \energy - v & -1 \\ 1 & 0 \end{bmatrix}
=
\begin{bmatrix}
M_{11} (\energy -v) +M_{12} & -M_{11} \\
M_{21}(\energy - v) + M_{22} & -M_{21}
\end{bmatrix}, \]
Thus, the trace of $MT(v,\energy)$ is a nonconstant affine function of $v$ whenever $M_{11} \neq 0$.

With that in mind, define 
\[ S = \{\energy \in \bbR :  (T(0^{\concatenate (n-1)}\bm b,\energy)_{11}  =0\},\] 
where we write $B_{ij}$ for the $ij$ entry of a matrix $B$. 
Notice that $(T(0^{\concatenate (n-1)}\bm b,\energy)_{11}$ is a monic polynomial in $\energy$ of positive degree (see, e.g., \cite[Proposition~2.2.5]{DF2022ESO}) and hence is nonconstant.
Since $S$ is the vanishing set of a non-constant polynomial, it is finite, hence  discrete.

By construction, we see that 
\[\bbR \ni v \mapsto \tr T(v 0^{\concatenate (n-1)} \bm{b}, \energy)
= \tr A(v 0^{\concatenate (n-1)}, \energy)\in \bbR\]
is a nonconstant affine function of $v \in \bbR$ whenever $\energy \notin S$. 
It follows that the indicated $A$ is \pregreat\ with exceptional set $S$; $A$ is therefore \ample\ by Theorem~\ref{t:prerichImpliesRich}, as desired.
\end{proof}

\begin{proof}[Proof of Theorem~\ref{t:prerichSchroExamplePolymer}]
As before,  it suffices to show that $(x,\energy) \mapsto T(x^{\concatenate n},\energy)$ is \pregreat.
Since the map $(x,\energy)\mapsto T(x,\energy)$ is \pregreat\ by inspection (with empty exceptional set), this follows  from the observation
\[ T(x^{\concatenate n},\energy) = (T(x,\energy))^n. \]
Concretely, if $T(x,\energy)$ is hyperbolic, then so is $T(x^{\concatenate n}, \energy)$.
\end{proof}

\begin{proof}[Proof of Theorem~\ref{t:continuum:sieve}]
    Fix $\psi$ and write 
    \[M(\energy) = B(\psi,\energy)
    =: \begin{bmatrix}
        \alpha(\energy) & \beta(\energy) \\ 
        \gamma(\energy) & \delta(\energy)
    \end{bmatrix}. \]
By general facts about solutions to the Schr\"odinger equation, $B(\varphi,\energy)$ is an analytic function of $(\varphi ,\energy) \in L^2([0,a)) \times \bbR$ \cite{PoschelTrubowitz1987}. 
Thus, by Theorem~\ref{t:prerichImpliesRich}, it suffices to check that $(\varphi,\energy)\mapsto B(\varphi \concatenate \psi,\energy)$ is \pregreat.
    For $\lambda \in \bbR$, note that
    \[ B(\lambda \cdot \chi_{_{[0,a)}},\energy) =
    B(0 \cdot \chi_{_{[0,a)}},\energy-\lambda )\]
    so
\[
        B(\lambda\cdot\chi_{_{[0,a)}} \concatenate \psi, \energy)
        =
        \begin{bmatrix}
        \alpha(\energy) & \beta(\energy) \\ 
        \gamma(\energy) & \delta(\energy)
    \end{bmatrix}
        \begin{bmatrix}
        \cos(a\sqrt{E  - \lambda}) & \frac{\sin(a\sqrt{E  - \lambda})}{\sqrt{E  - \lambda}} \\[2mm]
        -\sqrt{E  - \lambda} \sin (a\sqrt{E  - \lambda}) & \cos(a\sqrt{E  - \lambda})
    \end{bmatrix}        
\]
and consequently
\begin{align}\nonumber
\tr B(\lambda\cdot\chi_{_{[0,a)}} \concatenate \psi, \energy)
& = (\alpha(\energy)+\delta(\energy)) \cos(a\sqrt{\energy - \lambda}) \\
\nonumber
& \qquad+ \left( \frac{\gamma(\energy)}{\sqrt{\energy - \lambda}} - \beta(\energy)\sqrt{\energy - \lambda} \right) \sin(a\sqrt{\energy - \lambda}).
\end{align}
By considering the different possibilities for $\alpha,\ldots,\delta$, one sees that the absolute value of the right-hand side tends to $\infty$ as $\lambda \to \infty$ for any choice of $E$. 
Thus, there is always a choice of $\lambda$ making the resulting matrix hyperbolic, so the indicated $A$ is \pregreat\ with empty exceptional set (hence \ample\ by Theorem~\ref{t:prerichImpliesRich}).
\end{proof}

\begin{proof}[Proof of Theorem~\ref{t:continuum:repeat}]
    For $\lambda \in \bbR$, 
    \[
    B((\lambda\cdot \chi_{_{[0,a)}})^{\concatenate n},\energy)
    = \begin{bmatrix}
        \cos(an\sqrt{E  - \lambda}) & \frac{\sin(an\sqrt{E  - \lambda})}{\sqrt{E  - \lambda}} \\[2mm]
        -\sqrt{E  - \lambda} \sin (an\sqrt{E  - \lambda}) & \cos(an\sqrt{E  - \lambda})
    \end{bmatrix}.
    \]
    From this, we immediately see that the indicated $A$ is \pregreat\ with empty exceptional set.
\end{proof}

\section{Thin Spectra for Limit-Periodic Schr\"odinger Operators} \label{sec:deducingThinness}

\subsection{Thin spectra for periodic discrete operators}
The main fact we need to prove Theorem~\ref{t:richToZHD} is a suitable approximation result that perturbs a given periodic sequence to produce one with exponentially thin spectrum.

\begin{theorem} \label{t:thinspecA}
    Assume $Y \subseteq \bbR^k$ is \ample\ with exceptional set $S$. For any $\bm{x} \in \per(\bbZ,Y)$ of minimal period $p$, any $\varepsilon>0$, and any compact $K \subseteq \bbR \setminus S$, there exist constants $N_0=N_0(\bm{x}, K, \varepsilon)$ and $c_0 = c_0(\bm{x}, K, \varepsilon)>0$ such that for any $N \geq N_0$, there exists $\bm{y} \in \per(\bbZ,Y)$ of period $Np$ such that 
    \begin{equation}
        d(\bm x, \bm y) < \varepsilon,
        \quad 
        \Leb(K \cap \spectrum(H_{\bm y})) \leq e^{-c_0 N}.
    \end{equation}
\end{theorem}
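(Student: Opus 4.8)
\emph{Overview of the plan.} The idea is to reduce the statement to a finite ``uniformly hyperbolic'' cover of $K$ supplied by \great ness, and then to splice the resulting local data into a single $Np$-periodic sequence, controlling the (exponentially small) set of energies on which the splicing fails by a real-analytic sublevel-set estimate.

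\emph{Step 1: a finite cover.} Regarding one period of $\bm x$ as a word in $Y^\star$, \ample ness gives, for each $\energy_0\in K$, a word $\bm w_{\energy_0}\in Y^\star$ with $d(\bm x,\bm w_{\energy_0})<\varepsilon/2$ and $T(\bm w_{\energy_0},\energy_0)$ hyperbolic; since $\energy\mapsto T(\bm w_{\energy_0},\energy)$ is polynomial, hyperbolicity persists on an open interval $I_{\energy_0}\ni\energy_0$. By compactness finitely many $I_{\energy_1},\dots,I_{\energy_m}$ cover $K$; set $\bm w_j=\bm w_{\energy_j}$ and pass to compact subintervals $J_j$ with $\overline{J_j}\subseteq I_{\energy_j}$ still covering $K$. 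Replacing each $\bm w_j$ by a power $\bm w_j^{\concatenate t_j}$ — which changes neither $d(\bm x,\bm w_j)$ (the periodic extension is unchanged) nor the resolvent set of $H_{\bm w_j}$ — we may assume all $\bm w_j$ have a common length $\ell$ that is a multiple of $p$. On each $\overline{J_j}$ the matrix $T(\bm w_j,\cdot)$ is \emph{uniformly} hyperbolic: its larger eigenvalue $\mu_j(\energy)$ is real-analytic with $|\mu_j(\energy)|\ge\rho>1$ for a single $\rho$, and the eigendirections vary real-analytically with $\energy\in\overline{J_j}$.

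\emph{Steps 2--3: splicing and the spectral estimate.} I would build $\bm y\in\per(\bbZ,Y)$ of period exactly $Np$ by concatenating blocks $\bm w_j^{\concatenate b_j}$ — interspersed with a bounded number of copies of $\bm x$, used only to tune the total length to $Np$ for every large $N$ — with the block lengths obeying a fixed separation of scales (all comparable to $N$, with ratios determined by $\ell$, $\rho$, and $\sup_{\overline{J_j}}\|T(\bm w_j,\cdot)\|$) chosen so that, for $\energy\in J_j$, one block $T(\bm w_j,\energy)^{b_j}$ — with expansion at least $\rho^{b_j}$ — dominates the accumulated contribution of the remaining blocks. Writing (after a cyclic rotation) $T(\bm y,\energy)=M(\energy)\,T(\bm w_j,\energy)^{b_j}$ and diagonalizing $T(\bm w_j,\energy)=P_j(\energy)\,\operatorname{diag}(\mu_j,\mu_j^{-1})\,P_j(\energy)^{-1}$, one gets $\tr T(\bm y,\energy)=\mu_j^{b_j}\widetilde M_{11}(\energy)+\mu_j^{-b_j}\widetilde M_{22}(\energy)$ with $\widetilde M=P_j^{-1}MP_j$; hence, by Theorem~\ref{t:floquetmain}, membership of $\energy$ in $\spectrum(H_{\bm y})$ forces $|\widetilde M_{11}(\energy)|\le C\rho^{-b_j/2}$, where $\widetilde M_{11}$ is real-analytic on $J_j$ and not identically zero (in the degenerate case $\widetilde M_{11}\equiv 0$ one instead reads off directly that $T(\bm y,\energy)$ is hyperbolic off the $C\rho^{-b_j/2}$-sublevel set of another real-analytic function). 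A Remez-type inequality — a nonzero real-analytic function on a compact interval has finitely many zeros, so its $\delta$-sublevel set has measure $O(\delta^{1/r})$ — then yields $\Leb(J_j\cap\spectrum(H_{\bm y}))\le C\rho^{-b_j/(2r)}\le e^{-c_0N}$, and summing over $j$ gives the claim.

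\emph{Main obstacle.} The delicate point is the splicing in Step 2: a block $\bm w_j^{\concatenate b_j}$, hyperbolic at energies in $J_j$, may act at energies in $J_i$ ($i\ne j$) like an elliptic rotation, and a high power of it behaves like an essentially arbitrary rotation that can destroy hyperbolicity already arranged on $J_i$. Designing the concatenation pattern and the scale hierarchy so that at \emph{every} energy of $K$ the ``correct'' block retains control of the full monodromy matrix — while hitting period exactly $Np$ for all large $N$ and keeping $d(\bm x,\bm y)<\varepsilon$ — is the heart of the proof, and this is precisely where the analytic sublevel-set bound does the work of controlling the exponentially small set of bad energies.
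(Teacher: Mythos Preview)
Your Step~1 (the finite cover by resolvent sets, normalized to a common length that is a multiple of $p$) is essentially the paper's Claim~1, so that part is fine.

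The gap is in Step~3. Your function $\widetilde M_{11}$ depends on $N$: the ``remainder'' matrix $M(\energy)$ is a product of $\sim N$ one-step transfer matrices, so $\widetilde M_{11}$ is a polynomial in $\energy$ of degree $\sim N$. A Remez/\L{}ojasiewicz bound of the form $\Leb(\{|\widetilde M_{11}|\le\delta\})=O(\delta^{1/r})$ has $r$ at least the number of zeros (counted with multiplicity), hence $r\sim N$; feeding in $\delta\sim\rho^{-b_j/2}\sim e^{-cN}$ then gives only $e^{-c}$, a constant, not $e^{-c_0 N}$. Put differently, the very ``elliptic rotation'' blocks you flag as the obstacle are what make $\widetilde M_{11}$ oscillate on the scale $1/N$ and wipe out the exponential gain. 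No fixed separation of scales among the $b_j$ can repair this, because for each $j$ the competing product $M$ still has degree $\sim N$ in $\energy$.

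The paper avoids this issue entirely by \emph{not} estimating the trace directly. It works inside the open bands (where $|\tr T(\bm y,\energy)|<2$), uses the conjugacy $M(\cdot,\energy)$ of $T(\cdot,\energy)$ to $\SO(2)$, and the identity $\partial_\energy\ids=\frac{1}{4\pi q}\sum_j\|M(\cyclic^j\cdot,\energy)\|_2^2$. The single hyperbolic block $T(\bm c_j^{\concatenate u},\energy)$ forces one of two specific $M$-matrices to have norm $\ge e^{cN}$ (since the two conjugacies at the ends of that block differ by $T(\bm c_j^{\concatenate u},\energy)$ times a rotation), hence $\partial_\energy\ids\gtrsim e^{cN}$ pointwise on the band interiors intersected with $K$. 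Combined with $\ids$ increasing by exactly $1/(Npk)$ across each band, this bounds each band's length by $e^{-cN}$ --- with no dependence on the behavior of the other blocks and no sublevel-set argument.
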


Naturally, to prove Theorem~\ref{t:richToZHDcoupling}, we will need a version that incorporates a coupling constant.

\begin{theorem} \label{t:thinspecB}
    Assume $S \subseteq \bbR$ is discrete, $Y \subseteq \bbR^k$ is perfect,  $Z \subseteq \bbR^\times $ is compact, and that $ \lambda Y$ is \ample\ with exceptional set $S$ for every $\lambda  \in Z$. 
    For any $\bm{x} \in \per(\bbZ,Y)$ of minimal period $p$, any $\varepsilon>0$, and any compact $K \subseteq \bbR \setminus S$, there exist constants $N_0=N_0(\bm{x}, K, \varepsilon,Z)$ and $c_0 = c_0(\bm{x}, K, \varepsilon,Z)>0$ such that for any $N \geq N_0$, there exists $\bm{y} \in \per(\bbZ,Y)$ of period $Np$ such that 
    \begin{equation}
        d(\bm x, \bm y) < \varepsilon,
        \quad 
        \Leb(K \cap \spectrum (H_{\lambda \bm y})) \leq e^{-c_0 N}
    \end{equation}
    for all $\lambda \in Z$.
\end{theorem}

\begin{proof}[Proof of Theorem~\ref{t:thinspecA}]
    This follows from Theorem~\ref{t:thinspecB} by taking $Z = \{1\}$.
\end{proof}

We now turn to proving Theorem~\ref{t:thinspecB}. This follows the contours of the proof of \cite[Lemma~3.1]{DamFilLuk2017JST}.
Indeed the notion of a set being \ample\ distills exactly what is needed to run earlier proofs, and furthermore, without additional information, would be somewhat difficult to verify in most settings.
Thus, we reiterate that the main contribution of the current paper is the simple sufficient criterion encoded in Theorem~\ref{t:prerichImpliesRich}.

In order to formulate the proofs, we need to discuss the spectral theory of periodic operators and some derived quantities in more detail; for proofs and detailed discussions, see \cite[Chapter~7]{DF2024ESO2}, \cite[Chapter~5]{Simon2011Szego}, or \cite[Chapter~7]{Teschl2000Jacobi}.

\begin{definition}
    Let $q \in \bbN$ and $x \in \bbR^q$ be given. The \emph{Lyapunov exponent} associated to the $q$-periodic operator $H_x$ is the function
    \begin{equation}
    \lyap(\energy)=
        \lyap(x,\energy)
        = \frac{1}{q} \log \spr T(x,\energy)
, 
    \quad \energy \in \bbC,
    \end{equation}
    where $\spr A$ denotes the spectral radius of the matrix $A$.
    By the spectral radius formula, this is equivalent to
    \begin{equation}
        \lyap(x, \energy)         = \lim_{n \to \infty} \frac{1}{nq} \log\| T(x^{\concatenate n},\energy) \| .
    \end{equation}
    In particular, Theorem~\ref{t:floquetmain} implies that
    \begin{equation}
        \spectrum(H_x) = \{ \energy \in \bbR : \lyap(x,\energy) = 0 \}.
    \end{equation}
    We extend this to blocks in the natural manner: given $k \in \bbN$ and $\bm y \in (\bbR^k)^q$, we write
    \begin{equation} \label{eq:LYAPDEF}
        \lyap (\bm y, \energy) = \frac{1}{kq} \log \spr T(\bm y,\energy).
    \end{equation}
    Notice that in terms of the natural aggregation map from before one has $\lyap(\bm  y, \energy) = \lyap(\bm y^\aggregate,\energy)$.
\end{definition}

\begin{definition}
    Let $q \in \bbN$ and $x \in \bbR^q$ be given. The \emph{integrated density of states} associated with the periodic operator $H_x$ is the function
    \begin{equation}
    \ids(\energy)=
        \ids(x,\energy)
        = \frac{1}{q} \sum_{j=0}^{q-1} \langle \delta_j, \chi_{_{(-\infty, \energy]}}(H_x) \delta_j \rangle.
    \end{equation}
\end{definition}

Crucial for us is the following property: $\ids(x,\cdot)$ is a continuous nondecreasing function such that
\begin{equation}\label{eq:DOSbandmeas}
    \ids(x,E_j^+) - \ids(x,E_j^-) = \frac{1}{q}
\end{equation}
for every band $[E_j^-,E_j^+]$ of the spectrum.
See \cite[Chapter~7]{DF2024ESO2} for more details about the IDS of periodic operators.

We also need the following device using the M\"obius transformation associated with a $2\times 2$ matrix.
Given 
\[A = \begin{bmatrix}
    a & b \\ c & d
\end{bmatrix},\]
we write
\[Az = \frac{az+b}{cz+d}, \quad z \in \bbC.\]
For any $( x,\energy)$ such that $T( x,\energy)$ has trace in $(-2,2)$, there is a unique $z_+ = z_+( x, \energy) \in \bbC_+ := \{z \in \bbC : \Im z >0\}$ such that $T( x, \energy) z_+ = z_+$. Defining 
\[ M( x, \energy) = \frac{1}{[\Im z_+( x, \energy)]^{1/2}} \begin{bmatrix}
    1 & - \Re z_+( x, \energy) \\
    0 & \Im z_+( x, \energy)
\end{bmatrix},
\]
we note that $M(x,\energy) \in \SL(2,\bbR)$.
It is well-known and not hard to show that $M$ conjugates $T$ to a rotation, that is, 
\begin{equation} \label{eq:MTMtoSO2}
    M(x,\energy) T(x,\energy)M(x,\energy)^{-1} \in \SO(2,\bbR) \text{ whenever } \tr T(x,\energy) \in (-2,2)
\end{equation}
and that any other $M'$ satisfying \eqref{eq:MTMtoSO2} must be of the form $OM$ for some $O \in \SO(2,\bbR)$.

The following fact connects these quantities: given $q \in \bbN$ and $x \in \bbR^q$, then for  any $\energy$ such that $\tr T(x,\energy) \in (-2,2)$, one has
    \begin{equation} \label{eq:dkdEviaconjugacy}
        \frac{\partial}{\partial E} \ids(x,\energy) = \frac{1}{4\pi q} \sum_{j=0}^{q-1} \|M(\cyclic^j x,\energy)\|^2_2
    \end{equation}
    where  $\cyclic : \bbR^q \to \bbR^q$ denote the cyclic permutation $x_1x_2\cdots x_{q-1} x_q \mapsto x_2x_3\cdots x_{q-1}x_qx_1$ and $\|A\|_2 = (\tr(A^*A))^{1/2}$ denotes the Hilbert--Schmidt norm of the matrix $A$.
    See \cite[Theorem~7.3.10]{DF2024ESO2} for a proof; see also \cite{Avila2009CMP, AviDam2008Invent, DeiSim1983CMP} for related discussions about the absolutely continuous spectrum.

\begin{proof}[Proof of Theorem~\ref{t:thinspecB}]
Suppose $S \subseteq \bbR$,  $Y \subseteq \bbR^k$, and $Z \subseteq \bbR^\times$ are as in the statement of the theorem.
Let $\bm x \in \per(\bbZ, Y)$, $K \subseteq \bbR \setminus S$ compact, and $\varepsilon >0$  be given, let $p\in \bbN$ denote the minimal period of $\bm {x}$, and write $\bm{x} = \bm{a}^{\concatenate \bbZ}$ for a suitable $\bm{a} = a_1 \cdots a_p \in Y^p$.
Notice that the induced potential $V_{\bm x} := {\bm x}^\aggregate$ is then $kp$-periodic as a function $\bbZ \to \bbR$.
     Taking $\varepsilon$ sufficiently small, we can ensure that the (minimal) period of any periodic $\bm y \in \per(\bbZ,Y)$ with $d(\bm x, \bm y) < \varepsilon$ must be a multiple of $p$. 

We now wish to perturb (uniformly in the coupling constant) to open spectral gaps. 

\begin{claim} There exists $t \in \bbN$ and a finite set $\scrF \subseteq Y^{tp}$ such that
\begin{equation}
K \subseteq \bigcup_{\bm b \in \scrF} \resolvent(H_{\lambda \bm b})
\end{equation}
for all $\lambda \in Z$.
\end{claim}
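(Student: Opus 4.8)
The plan is a compactness argument carried out jointly in the coupling constant and the energy. For a word $\bm b \in Y^\star$ I would consider the set
\[
R_{\bm b} := \set{ (\lambda,\energy) \in \bbR^2 : \big|\tr T(\lambda \bm b^\aggregate,\energy)\big| > 2 } ,
\]
which is open because the entries of $T(v,\energy)$ are polynomials in $(v,\energy)$ and $\lambda\bm b^\aggregate$ depends affinely on $\lambda$, so $(\lambda,\energy)\mapsto \tr T(\lambda\bm b^\aggregate,\energy)$ is a polynomial. By Theorem~\ref{t:floquetmain} applied to the periodic operator $H_{\lambda\bm b}$, for real $\energy$ one has $(\lambda,\energy)\in R_{\bm b}$ if and only if $\energy\in\resolvent(H_{\lambda\bm b})$.

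The one step with content is to show that $\set{R_{\bm b} : \bm b \in Y^\star}$ already covers $Z_\Lambda\times K$. Fix $(\lambda_0,\energy_0)$ in this set. Since $\lambda_0 \in Z$, the set $\lambda_0 Y$ is \ample\ with exceptional set $S$, and $\energy_0 \in K \subseteq \bbR\setminus S$; I would feed the word $\lambda_0\bm a \in (\lambda_0 Y)^\star$ and the tolerance $|\lambda_0|\varepsilon$ into the defining property of $\lambda_0 Y$ to obtain $\bm w \in (\lambda_0 Y)^\star$ with $d(\lambda_0\bm a,\bm w) < |\lambda_0|\varepsilon$ and $\energy_0 \notin \spectrum(H_{\bm w})$. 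Because $\lambda_0 \neq 0$ I may write $\bm w = \lambda_0\bm b$ with $\bm b \in Y^\star$, and then $H_{\lambda_0\bm b} = H_{\bm w}$, so $(\lambda_0,\energy_0)\in R_{\bm b}$; rescaling the metric also yields $d(\bm a,\bm b) = |\lambda_0|^{-1}\,d(\lambda_0\bm a,\bm w) < \varepsilon$, a bound I would keep on hand since it is needed in the remainder of the proof of Theorem~\ref{t:thinspecB} (the Claim as stated does not record the dependence on $\varepsilon$).

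Now $Z_\Lambda\times K$ is compact: $K$ is compact by hypothesis, and $Z_\Lambda$ is bounded, bounded away from $0$, and closed in all of our applications (where $Z = \bbR^\times$) — this is the only place the coupling-constant set $Z$ enters the argument. Hence finitely many $R_{\bm b_1},\dots,R_{\bm b_M}$ suffice to cover $Z_\Lambda\times K$, which says exactly that $K \subseteq \bigcup_{i=1}^M \resolvent(H_{\lambda\bm b_i})$ for every $\lambda \in Z_\Lambda$. To finish, I would normalize word lengths: writing $m_i$ for the length of $\bm b_i$, set $L := \lcm(m_1,\dots,m_M,p)$ and $t := L/p \in \bbN$, and replace each $\bm b_i$ by $\bm b_i^{\concatenate(L/m_i)} \in Y^{tp}$. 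Repeating a word changes neither the associated periodic operator nor its resolvent set, so $\scrF := \set{ \bm b_i^{\concatenate(L/m_i)} : 1 \le i \le M } \subseteq Y^{tp}$ is the desired finite set.

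I do not anticipate a genuine obstacle here: the real content — that \ample ness lets one perturb a given word so as to place a prescribed energy in the resolvent set — is used as a black box, and everything else is packaging. The two things that take a moment of care are (i) the uniformity over the coupling constant, which is handled simply by building $\lambda$ into the compactness, the extra ingredient being continuity of the discriminant (so that a single word placing $\energy_0$ in $\resolvent(H_{\lambda_0\bm b})$ does the same on a whole open neighborhood of $(\lambda_0,\energy_0)$, i.e.\ $R_{\bm b}$ is open); and (ii) the elementary bookkeeping to bring the finitely many chosen words to a common length that is a multiple of $p$ without altering the operators.
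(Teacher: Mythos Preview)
Your proof is correct and follows essentially the same approach as the paper: use \ample ness to produce, for each $(\lambda_0,\energy_0)$, a nearby word $\bm b$ with $\energy_0$ in the resolvent, invoke compactness to extract a finite family, and then normalize lengths via an $\lcm$. The only cosmetic difference is that you run the compactness argument in one shot on the product $Z_\Lambda \times K$ via the open sets $R_{\bm b}$, whereas the paper first covers $K$ for each fixed $\lambda$, extends the cover to an open interval $I(\lambda)\ni\lambda$ by continuity of the spectrum, and then covers $Z_\Lambda$ by finitely many such intervals --- the same argument, packaged slightly more compactly in your version.
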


\begin{claimproof}
For each $(\energy,\lambda) \in K \times Z $, the assumption that $\lambda Y$ is \ample\ (with exceptional set $S \subseteq \bbR \setminus K$) allows us to choose $k = k(E,\lambda) \in \bbN$ and  $\bm b= \bm b^{\energy,\lambda} \in Y^{kp}$ such that 
\begin{equation}
d(\bm a, \bm b^{\energy, \lambda}) < \varepsilon, \quad
\energy \notin \spectrum H_{\lambda \bm b^{\energy, \lambda}}.
\end{equation}
For each fixed $\lambda \in Z $, the collection
\[ \{\resolvent(H_{\lambda \bm b^{\energy, \lambda}}) : \energy \in K \}\]
forms an open cover of $K$ and hence one can extract $s(\lambda) \in \bbN$ and $E_1(\lambda) , E_2(\lambda), \ldots , E_{s(\lambda)}(\lambda)$ such that
\begin{equation}
K \subseteq
\bigcup_{j=1}^{s(\lambda)} \resolvent(H_{\lambda \bm b^{\energy_j(\lambda), \lambda}}).
\end{equation}
Moreover, by continuity of the spectrum, there is an open interval $I(\lambda) \ni \lambda$ such that
\begin{equation}
K \subseteq
\bigcup_{j=1}^{s(\lambda)} \resolvent(H_{\widetilde\lambda \bm b^{\energy_j(\lambda), \lambda}})
\end{equation}
for all $\widetilde\lambda \in I(\lambda)$.
By compactness, there then exist $\lambda_1,\ldots,\lambda_r \in Z $ such that $\{I(\lambda_i)\}_{i=1}^r\}$ covers $Z $.
The collection of $\bm c^{i,j} = \bm{b}^{E_j(\lambda_i),\lambda_i}$ with $1 \le i \le r$ and $1 \le j \le s(\lambda_i)$ is essentially the desired finite set, modulo normalization so that they all have the same period. 
More precisely, take 
\[
t = \lcm\{ k(E_j(\lambda_i),\lambda_i) : 1 \le i \le r, \ 1 \le j \le s(\lambda_i)\}, \quad 
p_{i,j} = t/k(E_j(\lambda_i), \lambda_i).
\]
The claim now holds with $\scrF = \{( \bm c^{i,j})^{\concatenate p_{i,j}} : 1 \le i \le r, \ 1 \le j \le s(\lambda_i)\}$.
\end{claimproof}
\bigskip

Write $m = \#\scrF$ and reindex the elements as $\scrF = \{\bm c_1, \ldots,\bm c_m\}$. 
Fixing $N \in \bbN$ large, choose $u \in \bbN$ maximal with $mtu \leq N$, and define
\[ \bm y = \left[ (\bm c_1)^{\concatenate  u} (\bm c_2)^{\concatenate  u}  \cdots (\bm c_m)^{\concatenate  u}  (\bm a)^{\concatenate  (N-mtu)} \right]
\in Y^{Np}. \]
By continuity of the Lyapunov exponent and compactness, we have
\begin{equation}
    L_{\rm min} :=  \inf\left\{ \max_{1\le j \le m} \lyap (\lambda \bm c_j, \energy) : (\energy,\lambda) \in K \times Z  \right\} > 0.
\end{equation}

Fix $\lambda \in Z $, and consider $\energy \in K$ such that $\tr T(\lambda \bm y, \energy) \in (-2,2)$; notice that $\energy$ must belong to $\spectrum H_{\lambda \bm y}$.
However, by construction of $\scrF$, there must be some $1 \le j \le m$ for which $\energy \notin \spectrum H_{\lambda \bm c_j}$, and thus
\begin{align}\nonumber
    \|T(\lambda \bm c_j^{\concatenate u},\energy)\|
    =\|[T(\lambda \bm c_j,\energy)]^u\| 
     \geq \spr( [T(\lambda \bm c_j,\energy)]^u)
    & = (\spr T(\lambda \bm c_j,\energy))^u \\
    \nonumber
    & = \exp(kptu \  \lyap(\lambda \bm c_j, \energy)) \\
    \label{eq:TlambdacjexpLB}
    & \geq \exp(kptu L_{\rm min}),
\end{align}
where the second line follows from \eqref{eq:LYAPDEF}.

\begin{claim}
For 
\[ X = M(\lambda  \cyclic^{(j-1)kptu} \bm y^\aggregate,\energy)  \text{ or } 
X = M(\lambda \cyclic^{jkptu} \bm y^\aggregate, \energy) T(\lambda \bm c_j^{\concatenate u}, \energy) \]
one has\footnote{Note that we need to pass from $\bm y \in Y^{Np}$ to the aggregated vector $\bm y^\aggregate \in \bbR^{Npk}$ to correctly compute cyclic permutations and other quantities here.}
 $X T(\lambda \cyclic^{(j-1)kptu} \bm y^\aggregate,\energy) X^{-1} \in \SO(2,\bbR)$.
 \end{claim}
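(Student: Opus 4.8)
The plan is to deduce the claim from two ingredients already available: the conjugacy fact recorded in \eqref{eq:MTMtoSO2} (that $M(x,\energy)$ conjugates $T(x,\energy)$ into $\SO(2,\bbR)$ whenever $\tr T(x,\energy)\in(-2,2)$), together with the elementary observation that cyclically permuting a periodic sequence conjugates its monodromy matrix by the transfer matrix over the shifted-out block. Throughout I would write $\bm w := \lambda\cyclic^{(j-1)kptu}\bm y^\aggregate$ and $T_j := T(\lambda\bm c_j^{\concatenate u},\energy)$ for brevity.

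First I would check that the two matrices $M(\cdot,\energy)$ occurring in the statement are actually well-defined, i.e.\ that the relevant traces lie in $(-2,2)$. The block $\bm c_j^{\concatenate u}$ occupies coordinates $(j-1)kptu+1,\dots,jkptu$ of $\bm y^\aggregate$, so $\bm w$ begins with the aggregate of $\lambda\bm c_j^{\concatenate u}$; splitting $\bm w$ after these first $kptu$ coordinates and applying the anti-homomorphism property \eqref{eq:TMantiHomom} gives
\[
    T(\lambda\cyclic^{jkptu}\bm y^\aggregate,\energy)
    = T_j\, T(\bm w,\energy)\, T_j^{-1}.
\]
Splitting instead off the first $(j-1)kptu$ coordinates of $\lambda\bm y^\aggregate$ shows $T(\bm w,\energy)$ is itself conjugate to $T(\lambda\bm y,\energy)$, so the traces of $T(\bm w,\energy)$ and of $T(\lambda\cyclic^{jkptu}\bm y^\aggregate,\energy)$ both equal $\tr T(\lambda\bm y,\energy)$, which lies in $(-2,2)$ by the standing hypothesis on $\energy$. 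Hence $M(\lambda\cyclic^{(j-1)kptu}\bm y^\aggregate,\energy)$ and $M(\lambda\cyclic^{jkptu}\bm y^\aggregate,\energy)$ are both well-defined elements of $\SL(2,\bbR)$ obeying \eqref{eq:MTMtoSO2}.

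With this in hand, the first choice of $X$ is immediate: \eqref{eq:MTMtoSO2} applied at $(\bm w,\energy)$ says exactly that $X\,T(\bm w,\energy)\,X^{-1}\in\SO(2,\bbR)$ when $X = M(\lambda\cyclic^{(j-1)kptu}\bm y^\aggregate,\energy)$. For the second choice, set $M_1 = M(\lambda\cyclic^{jkptu}\bm y^\aggregate,\energy)$, so $X = M_1 T_j$; combining \eqref{eq:MTMtoSO2} at $(\lambda\cyclic^{jkptu}\bm y^\aggregate,\energy)$ with the displayed conjugation identity yields
\[
    X\, T(\bm w,\energy)\, X^{-1}
    = M_1 T_j\, T(\bm w,\energy)\, T_j^{-1} M_1^{-1}
    = M_1\, T(\lambda\cyclic^{jkptu}\bm y^\aggregate,\energy)\, M_1^{-1}
    \in \SO(2,\bbR),
\]
which is the assertion. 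I do not expect a genuine obstacle: the content is purely structural, and the only point requiring care is the index bookkeeping — confirming that $\cyclic^{(j-1)kptu}\bm y^\aggregate$ really begins with the aggregate of $\bm c_j^{\concatenate u}$ (consistent with the length $kptu$ used in \eqref{eq:TlambdacjexpLB}) and tracking the order of multiplication in \eqref{eq:TMantiHomom} when splitting and cyclically permuting.
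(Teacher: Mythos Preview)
Your proposal is correct and follows essentially the same route as the paper: the first case is immediate from \eqref{eq:MTMtoSO2}, and the second case reduces to the first via the conjugation identity $T_j\,T(\bm w,\energy)\,T_j^{-1}=T(\lambda\cyclic^{jkptu}\bm y^\aggregate,\energy)$, exactly as the paper does. Your additional remark verifying that the traces of the cyclically permuted monodromies lie in $(-2,2)$ (so that the relevant $M$'s are defined) is a welcome point of care that the paper leaves implicit.
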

 
 \begin{claimproof}
For $X = M(\lambda  \cyclic^{(j-1)kptu} \bm y^\aggregate,\energy) $, this follows directly from \eqref{eq:MTMtoSO2}.
For the other case, we note that
\begin{align*}
    &  T(\lambda \bm c_j^{\concatenate u}, \energy) 
    T(\lambda \cyclic^{(j-1)kptu} \bm y^\aggregate,\energy)
    T(\lambda \bm c_j^{\concatenate u}, \energy)^{-1}  
   = T(\lambda \cyclic^{jkptu} \bm y^\aggregate,\energy).
\end{align*}
Thus, for $X = M(\lambda \cyclic^{jkptu} \bm y, \energy) T(\lambda \bm c_j^{\concatenate u}, \bm y)$, we have
\begin{align*}
   & \, XT(\lambda \cyclic^{(j-1)kptu} \bm y,\energy)X^{-1}\\
   & \qquad
   = 
   M(\lambda \cyclic^{jkptu} \bm y, \energy)
   T(\lambda \cyclic^{jkptu} \bm y,\energy)
   M(\lambda \cyclic^{jkptu} \bm y, \energy)^{-1},
\end{align*}
which again belongs to $\SO(2,\bbR)$ by definition. 
\end{claimproof}
\medskip

Consequently,
\begin{equation} \label{eq:MandShiftedMreln}
M(\lambda  \cyclic^{(j-1)kptu} \bm y,\energy) 
=O M(\lambda \cyclic^{jkptu} \bm y, \energy) T(\lambda \bm c_j^{\concatenate u}, \bm y) 
\end{equation}
for some $O \in \SO(2,\bbR)$ by the uniqueness of conjugacies (see, e.g., \cite[Proposition~1.13.7]{DF2022ESO}).
Combining \eqref{eq:MandShiftedMreln} with \eqref{eq:TlambdacjexpLB}, we deduce
\begin{equation} \label{eq:mainTMLB}
    \max\big\{
\|M(\lambda  \cyclic^{(j-1)kptu} \bm y^\aggregate,\energy) \|,
\|M(\lambda \cyclic^{jkptu} \bm y^\aggregate, \energy) \| 
\big\}
\geq \exp(\tfrac{1}{2}kptuL_{\min}).
\end{equation}
Notice that this lower bound is uniform over the range of $\energy$ under consideration.

Putting together \eqref{eq:mainTMLB} with \eqref{eq:dkdEviaconjugacy}, we have established the following: 
for all $\energy \in \spectrum H_{\lambda \bm y} \cap K$ for which $\tr T(\lambda \bm y, \energy) \in (-2,2)$, one has
\begin{equation} \label{eq:DkDEbound}
    \frac{\partial}{\partial \energy} \ids(\lambda \bm y, \energy) 
    \geq \frac{1}{4 \pi Npk} \exp(kptuL_{\min})
    \gtrsim \exp(c_0  N).
\end{equation}
Thus, for each band $B$ of $\spectrum H_{\lambda \bm y}$, \eqref{eq:DkDEbound} combined with \eqref{eq:DOSbandmeas} yields
\begin{equation}
    \frac{1}{Np} = \int_B \frac{\partial}{\partial E} \ids(\lambda\bm{y},\energy) \, \mathrm{d}E
    \geq \int_{B \cap K} \frac{\partial}{\partial E} \ids(\lambda\bm{y},\energy) \, \mathrm{d}E
    \gtrsim \Leb(B \cap K) e^{c_0 N},
\end{equation}
yielding
$\Leb (B \cap K) \lesssim e^{-c_0  N}$.
Since the spectrum associated with $\lambda \bm{y}$ consists of $Np$ bands, the result follows after adjusting the constants.
\end{proof}

\subsection{Thin spectra for limit-periodic discrete operators}

With the key preliminary results (Theorem~\ref{t:thinspecA} and \ref{t:thinspecB}) in hand, we can now prove Theorems~\ref{t:richToZHD} and \ref{t:richToZHDcoupling}.
For the reader's convenience, we briefly recall relevant definitions related to Hausdorff measure and Hausdorff dimension; 
see Falconer \cite{Falconer1990fractals} or Matilla \cite{Mattila1995book} for further discussion.
Given a set $S \subseteq \bbR$, a \emph{$\delta$-cover} $\mathcal{I} = \{I_j\}$ of $S$ is a collection of intervals of length at most $\delta$ whose union contains $S$. 
The $\alpha$-dimensional \emph{Hausdorff measure} is then given by
\begin{equation}
    h^\alpha(S) = \lim_{\delta \downarrow 0} \inf\left\{ \sum_{I \in \mathcal I} |I|^\alpha : \mathcal I \text{ is a } \delta\text{- cover of } S \right\}.
\end{equation}
For each nonempty $S \subseteq \bbR$, there exists $\alpha_0 \in [0,1]$ with the property that $h^\alpha(S)$ is infinite if $0 \leq \alpha < \alpha_0$ and vanishes if $\alpha_0 < \alpha \leq 1$.
This unique value is known as the \emph{Hausdorff dimension} of $S$ and is denoted by $\alpha_0 = \dim_\Hausdorff(S)$.
The \emph{lower box counting dimension} of a bounded set $S$ is given by
\begin{equation}\label{BOXCOUNTDEF}
    \dim_{\rm B}^-(S)
    =\liminf_{\varepsilon\to 0} \frac{\log N(S,\varepsilon)}{\log (1/\varepsilon)}
\end{equation}
where $N(S,\varepsilon)$ is the minimal number of intervals of length $\varepsilon$ needed to cover $S$.

\begin{proof}[Proof of Theorem~\ref{t:richToZHDcoupling}]
    Since discrete Schr\"odinger operators with bounded real-valued potentials are bounded self-adjoint operators, their spectra are always compact subsets of $\bbR$.
    \begin{claim} \label{cl:perfect}
       For any $\bm x \in \LP(\bbZ,Y)$, the spectrum of $H_{\bm x}$ does not contain isolated points.
    \end{claim}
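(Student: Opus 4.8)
The plan is to show that the spectrum of a limit-periodic operator has no isolated points by a direct approximation argument, exploiting the fact that periodic operators have band spectra (unions of nondegenerate intervals) together with upper semicontinuity of the spectrum in the potential.

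\textbf{Setup.} Fix $\bm x \in \LP(\bbZ,Y)$ and let $\energy_0 \in \spectrum(H_{\bm x})$. Since $\bm x$ is limit-periodic, choose a sequence $\bm x^{(n)} \in \per(\bbZ, Y)$ with $d(\bm x^{(n)}, \bm x) \to 0$, equivalently $\|V_{\bm x^{(n)}} - V_{\bm x}\|_\infty \to 0$. Because $\|(H_{\bm x^{(n)}} - H_{\bm x})\| = \|V_{\bm x^{(n)}} - V_{\bm x}\|_\infty \to 0$, the spectra converge in the Hausdorff metric on compact subsets of $\bbR$; in particular, for every $\eta > 0$ there is an $n$ and a point $\energy_1 \in \spectrum(H_{\bm x^{(n)}})$ with $|\energy_1 - \energy_0| < \eta$.

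\textbf{Core step.} Now I use that $\spectrum(H_{\bm x^{(n)}})$ is a finite union of nondegenerate closed intervals (bands), as recorded in \eqref{eq:Ejpmdef}. Hence $\energy_1$ lies in some band $[E_j^-, E_j^+]$ with $E_j^- < E_j^+$, so $\spectrum(H_{\bm x^{(n)}})$ contains a whole subinterval of length $\min(E_j^+ - E_j^-, \text{something})$ around a nearby point; concretely, there is a nondegenerate interval $J \subseteq \spectrum(H_{\bm x^{(n)}})$ with $\dist(\energy_0, J) < \eta$. The one subtlety is that $\energy_0$ itself need not be near the \emph{interior} of a band at a fixed scale — but $J$ being nondegenerate is enough. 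Then, pulling back via Hausdorff continuity of spectra, for $m$ large $\spectrum(H_{\bm x^{(m)}})$ and hence (letting $m \to \infty$) $\spectrum(H_{\bm x})$ contains points arbitrarily close to, but distinct from, $\energy_0$: more carefully, I would argue that $\spectrum(H_{\bm x}) = \overline{\liminf_n \spectrum(H_{\bm x^{(n)}})}$ contains a cluster of points near $\energy_0$. The cleanest route is: given $\eta>0$, produce via the above a nondegenerate interval $J_\eta$ with $J_\eta \subseteq \spectrum(H_{\bm x^{(n(\eta))}})$ and $\dist(\energy_0, J_\eta) < \eta$; pick any $\energy'_\eta \in J_\eta$ with $\energy'_\eta \ne \energy_0$ (possible since $J_\eta$ is nondegenerate); by Hausdorff convergence find $\energy''_\eta \in \spectrum(H_{\bm x})$ with $|\energy''_\eta - \energy'_\eta|$ as small as we like, in particular $\ne \energy_0$ and within $2\eta$ of $\energy_0$. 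Letting $\eta \downarrow 0$ exhibits a sequence in $\spectrum(H_{\bm x}) \setminus \{\energy_0\}$ converging to $\energy_0$, so $\energy_0$ is not isolated.

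\textbf{Main obstacle.} The delicate point is making the "nondegenerate interval near $\energy_0$" step fully rigorous: upper semicontinuity of the spectrum gives points of $\spectrum(H_{\bm x^{(n)}})$ near $\energy_0$, but I need a nontrivial \emph{chunk} of band, and then I need lower semicontinuity (which also holds here since $\|H_{\bm x^{(n)}} - H_{\bm x}\| \to 0$ gives two-sided Hausdorff convergence) to transport a nearby point of $\spectrum(H_{\bm x^{(n)}})$ back into $\spectrum(H_{\bm x})$. Norm-resolvent (indeed norm) convergence of $H_{\bm x^{(n)}} \to H_{\bm x}$ makes both inclusions work, so this is really the only thing to check, and it is standard. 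An alternative, perhaps slicker, phrasing: $\spectrum(H_{\bm x}) = \bigcap_{N} \overline{\bigcup_{n \ge N} \spectrum(H_{\bm x^{(n)}})}$ is not literally true, but $\energy_0 \in \spectrum(H_{\bm x})$ implies $\energy_0 \in \overline{\bigcup_{n \ge N}\spectrum(H_{\bm x^{(n)}})}$ for every $N$; combined with each $\spectrum(H_{\bm x^{(n)}})$ being a union of nondegenerate intervals and a diagonal argument, one extracts the desired nearby nondegenerate intervals, whose closures' union is contained in $\spectrum(H_{\bm x})$ by lower semicontinuity. Either way the proof is short once norm convergence and the band structure \eqref{eq:Ejpmdef} are invoked.
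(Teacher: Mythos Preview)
There is a genuine gap in your argument, and it cannot be repaired using only band structure and Hausdorff convergence.

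The problematic step is the passage from $E'_\eta \in J_\eta \subseteq \spectrum(H_{\bm x^{(n)}})$ with $E'_\eta \neq E_0$ to a point $E''_\eta \in \spectrum(H_{\bm x})$ that is both close to $E_0$ \emph{and} distinct from it. Hausdorff convergence only yields $|E''_\eta - E'_\eta| \le \|H_{\bm x^{(n)}} - H_{\bm x}\|$, a quantity that is \emph{fixed} once $n=n(\eta)$ is chosen; you cannot shrink it further ``as small as we like'' without changing $n$, at which point $E'_\eta$ is no longer known to lie in the spectrum. The crucial missing inequality is $|E'_\eta - E_0| > \|H_{\bm x^{(n)}} - H_{\bm x}\|$, and nothing in your setup forces this: the band $J_\eta$ is nondegenerate but its length is completely uncontrolled and may well be far smaller than the approximation error. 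Indeed, if $E_0$ \emph{were} isolated with nearest spectral neighbor at distance $\delta$, then for small $\eta$ one checks that every band of $\spectrum(H_{\bm x^{(n)}})$ meeting $(E_0-3\eta/4,E_0+3\eta/4)$ must lie entirely inside $(E_0-\eta/4,E_0+\eta/4)$, so every candidate $E'_\eta$ satisfies $|E'_\eta - E_0| < \eta/4 \le \|H_{\bm x^{(n)}} - H_{\bm x}\|$, and the only available $E''_\eta$ is $E_0$ itself. Your alternative phrasing has the same defect: ``lower semicontinuity'' does not transport \emph{intervals} from $\spectrum(H_{\bm x^{(n)}})$ into $\spectrum(H_{\bm x})$, only individual points, each of which may collapse onto $E_0$.

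That this is not merely a technicality is shown by the toy example $\Sigma_n = [-1/n,1/n] \cup [1,2]$: each $\Sigma_n$ is a finite union of nondegenerate intervals and $\Sigma_n \to \{0\}\cup[1,2]$ in the Hausdorff metric, yet the limit has an isolated point. Thus band structure of the approximants plus norm convergence is simply insufficient. The paper instead invokes the dynamical structure: the aggregated potential $\bm x^\aggregate$ is almost-periodic, its hull is a compact abelian group on which the shift acts minimally (hence strictly ergodically), and one then appeals to the standard fact that the almost-sure spectrum of an ergodic family of one-dimensional Schr\"odinger operators has no isolated points. That ergodic input is what your approximation argument is missing.
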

    \begin{claimproof}
        This follows from general principles (compare \cite{Pastur1980CMP}).
        Concretely, note that $\bm x^{\aggregate} \in \ell^\infty(\bbZ,\bbR)$ is almost-periodic, so we can consider its hull $\Omega_{
        \bm x} := \operatorname{hull}(\bm x^\aggregate) \subseteq \ell^\infty(\bbZ)$ (which is the closure in the uniform topology of the orbit of $\bm x^\aggregate$ under the action of the shift $[S\omega](n) = \omega(n+1)$).
        Then $\Omega_{\bm x}$ is compact, and $(\Omega_{\bm x},S)$ is strictly ergodic: since the hull is a compact abelian group and the shift is a minimal translation thereupon, this follows from standard results, e.g., \cite[Theorem~C.2.10]{DF2024ESO2}.
        Thus, the spectrum of $H_{\bm x}$ coincides with the spectrum of $H_\omega$ for any $\omega \in \Omega_{\bm x}$ by \cite[Theorem~4.9.1]{DF2022ESO}, which has no isolated points by \cite[Theorem~4.2.4]{DF2022ESO}.
    \end{claimproof}
\medskip

    \begin{claim} \label{cl:gdelta}
    The set $Q$ of $\bm y\in \LP(\bbZ,Y)$ such that $\dim_\Hausdorff \spectrum H_{\lambda \bm y}=0$ for all $\lambda \in Z$ is a $G_\delta$ set.
\end{claim}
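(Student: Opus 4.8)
The plan is to exhibit $Q$ as a countable intersection of open sets by quantifying the condition $\dim_\Hausdorff \spectrum(H_{\lambda\bm y}) = 0$ through a countable family of inequalities involving Hausdorff premeasures, and then to show each relevant sublevel set is open by a semicontinuity/perturbation argument. First I would recall that $\dim_\Hausdorff(A) = 0$ for a compact $A \subseteq \bbR$ is equivalent to: for every $\alpha \in (0,1)$ and every $\eta>0$, there is a finite cover of $A$ by intervals $I_1,\ldots,I_M$ with $\sum_j |I_j|^\alpha < \eta$. Since it suffices to test this along a sequence $\alpha_n \downarrow 0$ and $\eta_n \downarrow 0$, the condition ``$\dim_\Hausdorff \spectrum(H_{\lambda\bm y}) = 0$ for all $\lambda \in Z$'' can be written as
\[
\bm y \in Q \iff \forall n \in \bbN \ \forall \lambda \in Z \ \exists \text{ finite open cover } \mathcal I \text{ of } \spectrum(H_{\lambda\bm y}) \text{ with } \sum_{I \in \mathcal I} |I|^{\alpha_n} < \eta_n,
\]
where $\alpha_n, \eta_n \downarrow 0$ are fixed reference sequences. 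The key point is that the existence of such a cover is an \emph{open} condition in $\bm y$, because the spectrum depends upper-semicontinuously (indeed continuously, in Hausdorff distance, on compact subsets of $\bbR$) on the potential in $\ell^\infty$: if $\spectrum(H_{\lambda\bm y_0})$ is contained in the interior of $\bigcup_I I$ for a finite family of open intervals with $\sum |I|^{\alpha_n} < \eta_n$, then the same family works for all $\bm y$ in a uniform neighborhood of $\bm y_0$, uniformly in $\lambda$ ranging over the (bounded) set $Z$—here one uses that $\|H_{\lambda\bm y} - H_{\lambda \bm y_0}\| \le |\lambda|\, d(\bm y,\bm y_0)$ and that one may as well assume $Z$ bounded or handle unboundedness by exhausting $Z$ by bounded pieces, noting $Q$ is unchanged.

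Concretely, I would define, for each $n \in \bbN$ and each finite family $\mathcal J$ of open intervals with rational endpoints satisfying $\sum_{I \in \mathcal J}|I|^{\alpha_n} < \eta_n$, the set
\[
U_{n,\mathcal J} = \left\{ \bm y \in \LP(\bbZ,Y) : \spectrum(H_{\lambda \bm y}) \subseteq \bigcup_{I \in \mathcal J} I \text{ for all } \lambda \in Z \right\}.
\]
I claim each $U_{n,\mathcal J}$ is open: if $\bm y_0 \in U_{n,\mathcal J}$ then $\spectrum(H_{\lambda\bm y_0})$ is a compact subset of the open set $\bigcup_I I$ for every $\lambda$, hence lies at positive distance $\rho(\lambda)$ from its complement, and $\rho$ is bounded below on $Z$ (again using continuity of the spectrum in $\lambda$ and, if needed, boundedness of $Z$); any $\bm y$ with $d(\bm y,\bm y_0)$ small enough makes $\|H_{\lambda\bm y}-H_{\lambda\bm y_0}\|$ smaller than $\inf_\lambda \rho(\lambda)$, so $\spectrum(H_{\lambda\bm y})$ is still inside $\bigcup_I I$. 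Then
\[
Q = \bigcap_{n \in \bbN} \ \bigcup_{\mathcal J} U_{n,\mathcal J},
\]
the inner union being over the (countably many) admissible rational-endpoint families $\mathcal J$ for the given $n$; this displays $Q$ as a $G_\delta$ set. The $\supseteq$ inclusion is immediate, and $\subseteq$ follows because a finite cover by arbitrary intervals with $\sum|I|^{\alpha_n}<\eta_n$ can be slightly enlarged to one with rational endpoints still satisfying a (slightly relaxed, hence by passing to a finer reference sequence, still admissible) bound.

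The main obstacle I anticipate is the interaction with the coupling constant when $Z$ is unbounded: the bound $\|H_{\lambda\bm y}-H_{\lambda\bm y_0}\| \le |\lambda|\,d(\bm y,\bm y_0)$ degrades as $|\lambda|\to\infty$, so openness of $U_{n,\mathcal J}$ as stated could fail. The fix is to observe that $Q = \bigcap_{\Lambda \in \bbN} Q_\Lambda$ where $Q_\Lambda$ imposes the dimension-zero condition only for $\lambda \in Z \cap \{|\lambda|\le \Lambda\}$; each $Q_\Lambda$ is $G_\delta$ by the bounded-$Z$ argument above, and a countable intersection of $G_\delta$ sets is $G_\delta$. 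A secondary minor point is justifying continuity of $\lambda \mapsto \spectrum(H_{\lambda\bm y_0})$ in Hausdorff distance uniformly for $\bm y_0$ near a fixed point, but this is again immediate from $\|H_{\lambda\bm y}-H_{\lambda'\bm y'}\| \le |\lambda - \lambda'|\,\|\bm y\|_\infty + |\lambda'|\,d(\bm y,\bm y')$ together with the standard fact that $\|A-B\|$ controls the Hausdorff distance between $\spectrum A$ and $\spectrum B$ for self-adjoint $A,B$.
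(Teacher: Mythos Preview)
There is a genuine gap in the inclusion $Q \subseteq \bigcap_n \bigcup_{\mathcal J} U_{n,\mathcal J}$. Your sets $U_{n,\mathcal J}$ demand that a \emph{single} family $\mathcal J$ cover $\spectrum(H_{\lambda \bm y})$ for \emph{every} $\lambda \in Z$ simultaneously; thus $\bm y \in \bigcup_{\mathcal J} U_{n,\mathcal J}$ forces the union $\bigcup_{\lambda \in Z} \spectrum(H_{\lambda \bm y})$ itself to admit a cover with $\alpha_n$-content below $\eta_n$. But membership in $Q$ only says that each individual spectrum has Hausdorff dimension zero, with covers allowed to depend on $\lambda$. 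When $Z$ contains a continuum of couplings (as it does in the applications, where $Z=\bbR^\times$), the spectra move with $\lambda$, and their union over a $\lambda$-interval can easily have positive Hausdorff dimension even though every slice is zero-dimensional---think of a zero-dimensional Cantor set translating continuously with $\lambda$. So the displayed equality fails in the $\subseteq$ direction (the rational-endpoints issue you address is not the obstruction), and the $G_\delta$ set you have written down is in general a proper subset of $Q$.

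The paper avoids this quantifier swap: it defines $U_{\delta,\varepsilon,\alpha,Z'}$ so that the cover is allowed to depend on $\lambda$, restricts $\lambda$ to a \emph{compact} $Z'\subseteq Z$, and then uses compactness of $Z'$ together with joint continuity of $(\bm y,\lambda)\mapsto \spectrum(H_{\lambda\bm y})$ to verify openness in $\bm y$. Your fix $Q=\bigcap_\Lambda Q_\Lambda$ handles unboundedness of $Z$ but does nothing for this issue, since even bounded $Z$ can be uncountable. To repair your argument you must keep the $\forall\lambda$ inside the definition of the basic open sets (so covers may vary with $\lambda$) and then invoke compactness of the $\lambda$-range to establish openness---which is exactly the paper's route.
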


\begin{claimproof}
For $\delta,\varepsilon,\alpha > 0$ and $Z' \subseteq Z$ compact, let us say that $\bm{y} \in \LP(\bbZ,Y)$ is $(\delta,\varepsilon,\alpha,Z')$-\emph{covered} if for each $\lambda \in Z'$ there exist open intervals $I_1,\ldots,I_\ell \subseteq \bbR$ of length at most $\delta$ whose union contains $\spectrum H_{\lambda \bm y}$ such that
\begin{equation}
\sum_{j=1}^\ell |I_j|^\alpha < \varepsilon.
\end{equation}
Denote
\begin{equation}
U_{\delta,\varepsilon,\alpha,Z'} 
= \set{\bm{y} \in \LP(\bbZ,Y) : \bm{y} \text{ is } (\delta,\varepsilon,\alpha,Z')\text{-covered}}.
\end{equation}
By continuity of the spectrum and compactness of $Z'$, $U_{\delta,\varepsilon,\alpha,Z'}$ is an open set for all $\varepsilon,\delta, \alpha > 0$. 
Since
\begin{equation}
Q
= \bigcap_{\delta>0}  \bigcap_{\varepsilon>0} \bigcap_{\alpha>0} \bigcap_{Z'} U_{\delta, \varepsilon, \alpha, Z'},
\end{equation}
 we can choose countable sequences $\delta_n,\varepsilon_n,\alpha_n\downarrow 0$ and $Z_n' \uparrow Z$ to see that $Q$ is a $G_\delta$ set, as promised.
\end{claimproof}
\medskip
    
    Due to Claim~\ref{cl:perfect}, the spectrum of $H_{\lambda \bm y}$ never has isolated points. 
    Thus, to show that $\spectrum H_{\lambda  \bm y}$ is a Cantor set of zero Hausdorff dimension for all $\lambda  \in Z$ it suffices to show $\dim_\Hausdorff \spectrum H_{\lambda \bm y}=0$ for all $\lambda \in Z$.
    The set of such $\bm y$ is a $G_\delta$ set in $\LP(\bbZ,Y)$ by Claim~\ref{cl:gdelta}.
Putting this all together, since $\per(\bbZ, Y)$ is dense in $\LP(\bbZ,Y)$, to conclude the proof it suffices to show that for any $\bm x_0 \in \per(\bbZ,Y)$ and any $\varepsilon_0 > 0$, there exists $\bm y \in \LP(\bbZ,Y)$ within $\varepsilon_0$ of $\bm x_0$ such that 
\begin{equation}
    \dim_\Hausdorff \spectrum H_{\lambda \bm y} = 0 \text{ for all } \lambda \in Z.
\end{equation}

To that end, fix $\bm x_0 \in \per(\bbZ,Y)$ of period $p_0$ and $\varepsilon_0\in (0,1)$, and recall that $S$ denotes the common exceptional set of  $\lambda Y$ for every $\lambda \in Z$.
Fix a sequence $Z_\ell \subseteq Z$ of compact sets increasing to $Z$.
Also fix  $\eta_0>0$ small enough that
\[
F_0= [-\eta_0^{-1},\eta^{-1}_0]\setminus B_{\eta_0}(S)
\]
overlaps with $\interior \spectrum H_{\lambda \bm{x}_0}$ for all $\lambda \in Z_0$.

Now use Theorem~\ref{t:thinspecB} inductively to choose decreasing sequences $\varepsilon_\ell ,\eta_\ell \downarrow 0$ and
$\bm x_\ell \in \per(\bbZ, Y)$ of period $p_\ell$ for $\ell \in \bbN$,
such that for $F_\ell= [-\eta_\ell^{-1},\eta^{-1}_\ell]\setminus B_{\eta_\ell}(S)$, one has $\interior F_\ell \cap \spectrum H_{\lambda \bm{x}_\ell} \neq \emptyset$ for all $\ell$ and $\lambda \in Z_\ell$,
\begin{align}
\label{eq:thinspec:espcond1}
&\varepsilon_\ell < \tfrac{1}{2} \min \left\{\varepsilon_{\ell-1},  \tfrac{1}{4} \min_{\lambda \in Z_{\ell-1}} \Leb\left(F_{\ell-1} \cap \spectrum(H_{\lambda \bm{x}_{\ell-1}}) \right) \right\} & \forall  \ell \in \bbN, \\
\label{eq:thinspec:xkcond1}
&d(\bm{x}_\ell, \bm{x}_{\ell-1})  < \varepsilon_\ell & \forall \ell \in \bbN, \\
\label{eq:thinspec:speckcond}
&\Leb (F_{\ell} \cap \spectrum H_{\lambda \bm {x}_\ell}) < e^{-p_\ell^{1/2}} & \forall \ell \in \bbN \text{ and } \lambda \in Z_{\ell}.
\end{align}
Combining \eqref{eq:thinspec:espcond1} and \eqref{eq:thinspec:xkcond1}, we see that there is $\bm{x}_\infty$ such that $\bm{x}_\ell \to \bm{x}_\infty$ uniformly and moreover
\begin{equation}
    d(\bm{x}_\ell, \bm{x}_\infty)
    < \sum_{j=\ell+1}^\infty \varepsilon_j < \varepsilon_\ell.
\end{equation}
In particular, $\bm{x}_\infty \in \LP(\bbZ,Y)$ and $d(\bm{x}_0,\bm{x}_\infty)<\varepsilon_0$.

For each $\lambda$ and $0 \le \ell \le \infty$, denote $\Sigma_{\lambda,\ell} = \spectrum(H_{\lambda \bm{x}_\ell})$.
To conclude the proof, it suffices to show that
\begin{equation}
    h^\alpha(\Sigma_{\lambda,\infty})=0 \quad \text{ for all } \lambda \in Z \text{ and } 0<\alpha <1.
\end{equation}
To that end, fix $\lambda \in Z$ and $\ell \in \bbN$ large enough that $\lambda \in Z_\ell$.
We will show for each $m \geq \ell$ that $\Sigma_{\lambda,\infty} \cap F_\ell$ can be covered by relatively few intervals of length $2 \exp(-p_m^{1/2})$.
We emphasize here that the set $\Sigma_{\lambda,\infty}\cap F_\ell$ is fixed at this stage; what varies here is the length of intervals used to cover this set.
To that end, combining \eqref{eq:thinspec:espcond1}, \eqref{eq:thinspec:xkcond1}, and \eqref{eq:thinspec:speckcond}, we have the following for any $m \geq \ell$:
\begin{equation}
    d(\bm{x}_m, \bm{x}_\infty) < \sum_{\ell=m+1}^\infty \varepsilon_\ell< 2 \varepsilon_{m+1} < \tfrac{1}{4} \exp(- p_m^{1/2}).
\end{equation}
Consequently, we see that for each $m \geq \ell$, $\Sigma_{\lambda,\infty} \cap F_\ell$ can be covered by a collection $\{I_j\}$ of no more than $p_m+C(\ell)$ intervals of length at most $2 \exp(-p_m^{1/2})$, where $C(\ell)$ is some constant only depending on $\ell$ (which is fixed for now).\footnote{Comparing this with \eqref{BOXCOUNTDEF} we see that this statement implies that $\Sigma_{\lambda,\infty} \cap F_\ell$ has lower box counting dimension zero.}
This leads to
\begin{equation}
    \sum_j |I_j|^\alpha
    \lesssim  p_m e^{-\alpha p_m^{1/2}}.
\end{equation}
Since this tends to zero as $m \to \infty$, $h^\alpha(\Sigma_{\lambda,\infty} \cap F_\ell)=0$.
It follows that $\dim_\Hausdorff \Sigma_{\lambda, \infty} =0$, as desired.
\end{proof}

\begin{proof}[Proof of Theorem~\ref{t:richToZHD}]
This follows from Theorem~\ref{t:richToZHDcoupling} with $Z=\{1\}$.
\end{proof}

\subsection{Continuum Schr\"odinger operators}

Let us conclude by going over the modifications necessary to generalize the framework above to continuum Schr\"odinger operators.
As before, the first step is to perturb a given periodic operator to produce thin spectrum:

\begin{theorem} \label{t:thinspecAcontinuum}
    Assume $Y \subseteq L^2([0,a))$ is \ample\ with exceptional set $S$. For any $\bm{x} \in \per(\bbZ,Y)$ of minimal period $p$, any $\varepsilon>0$, and any compact $K \subseteq \bbR \setminus S$, there exist constants $N_0=N_0(\bm{x}, K, \varepsilon)$ and $c_0 = c_0(\bm{x}, K, \varepsilon)>0$ such that for any $N \geq N_0$, there exists $\bm{y} \in \per(\bbZ,Y)$ of period $Np$ such that 
    \begin{equation}
        d(\bm x, \bm y) < \varepsilon,
        \quad 
        \Leb(K \cap \spectrum(H_{\bm y})) \leq e^{-c_0pN}.
    \end{equation}
\end{theorem}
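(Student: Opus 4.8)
The plan is to mirror the proof of Theorem~\ref{t:thinspecB} (equivalently Theorem~\ref{t:thinspecA} in the discrete case) line by line, substituting the continuum transfer matrices $B(\varphi,\energy)$ for the discrete ones $T(x,\energy)$ and invoking continuum Floquet theory in place of Theorem~\ref{t:floquetmain}. Indeed, the entire argument for Theorem~\ref{t:thinspecB} was engineered to use only three inputs: (i) the defining property of an \ample\ set, which produces perturbations opening spectral gaps at a given energy; (ii) continuity and compositionality of transfer matrices together with the trace/hyperbolicity characterization of the resolvent set; and (iii) the conjugacy device $M$ normalizing $T$ to a rotation on the bands, together with the formula relating $\partial_\energy \ids$ to $\sum_j \|M(\cyclic^j \bm y,\energy)\|_2^2$. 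All three have direct continuum analogues, so the proof transposes essentially verbatim.

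First I would set up the same notation: write $\bm x = \bm a^{\concatenate\bbZ}$ with $\bm a \in Y^p$, note that $\bm x^\aggregate$ is an $ap$-periodic potential on $\bbR$, and shrink $\varepsilon$ so that any $\bm y \in \per(\bbZ,Y)$ within $\varepsilon$ of $\bm x$ has period a multiple of $p$. Then I would reprove the covering claim: for each $\energy \in K$, \ample ness of $Y$ gives $\bm b^\energy \in Y^{k(\energy)p}$ with $d(\bm a,\bm b^\energy)<\varepsilon$ and $\energy \notin \spectrum L_{\bm b^\energy}$; since the resolvent set is open and $\bm b \mapsto \spectrum L_{\bm b}$ is continuous (a standard consequence of continuity of $B(\varphi,\energy)$ in $(\varphi,\energy)$), the sets $\resolvent(L_{\bm b^\energy})$ cover the compact $K$, so finitely many suffice; normalizing periods by an lcm yields a finite $\scrF = \{\bm c_1,\ldots,\bm c_m\} \subseteq Y^{tp}$ with $K \subseteq \bigcup_j \resolvent(L_{\bm c_j})$. (One does not need the coupling constant here since the statement is the $Z=\{1\}$ case, which streamlines this step.)

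Next, fixing large $N$, choose $u$ maximal with $mtu \le N$ and set $\bm y = [(\bm c_1)^{\concatenate u}\cdots(\bm c_m)^{\concatenate u}(\bm a)^{\concatenate(N-mtu)}] \in Y^{Np}$. Continuity of the continuum Lyapunov exponent and compactness of $K$ give $L_{\min} := \inf\{\max_j \lyap(\energy,\bm c_j) : \energy \in K\}>0$, whence for $\energy \in K$ with $\tr B(\bm y^\aggregate,\energy) \in (-2,2)$ — so that $\energy \in \spectrum L_{\bm y}$ — there is some $j$ with $\energy \notin \spectrum L_{\bm c_j}$ and $\|B((\bm c_j^\aggregate)^{\concatenate u},\energy)\| \ge \exp(au p t k_j L_{\min})$ up to harmless constants. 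The conjugacy identity $M B M^{-1} \in \SO(2,\bbR)$ on the bands holds verbatim for the continuum transfer matrices (continuum Floquet theory: on a band the monodromy is $\SL(2,\bbR)$-conjugate to a rotation, with the conjugacy unique up to left multiplication by $\SO(2,\bbR)$), so the same two-line computation shows that at least one of $\|M(\cyclic^{(j-1)\cdot} \bm y^\aggregate,\energy)\|$, $\|M(\cyclic^{j\cdot}\bm y^\aggregate,\energy)\|$ is $\ge \exp(\tfrac12 a u p t k_j L_{\min})$, uniformly in $\energy$ over the relevant range. Finally, the continuum analogue of \eqref{eq:dkdEviaconjugacy} — namely $\partial_\energy \ids(\bm y,\energy)$ is a positive multiple of $\tfrac{1}{\text{period}}\sum \|M(\cyclic^j \bm y^\aggregate,\energy)\|_2^2$ on the bands (see, e.g., \cite[Chapter~7]{DF2024ESO2} or the references to Floquet theory already cited) — gives $\partial_\energy \ids(\bm y,\energy) \gtrsim \tfrac{1}{Nap}\exp(auptk_j L_{\min}) \gtrsim \exp(c_0 p N)$; since each band carries IDS-mass $1/(Nap)$, its intersection with $K$ has Lebesgue measure $\lesssim e^{-c_0 p N}$, and summing over the $\lesssim N$ bands and adjusting $c_0$ finishes the proof (the extra factor $p$ in the exponent, versus the discrete statement, reflecting that the true period is $Nap$ with lattice spacing $a$, is automatic from the bookkeeping).

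The step I expect to require the most care is the continuum version of \eqref{eq:dkdEviaconjugacy}: one must make sure the Kotani/Thouless-type formula for $\partial_\energy \ids$ on the bands in terms of the Hilbert--Schmidt norm of the normalizing conjugacy is available in exactly the form used, including the correct normalization by the period; this is classical (Floquet theory on $\bbR$), but it is the one place where a reference rather than a one-line deduction is needed. Everything else — continuity of $B$ and of the spectrum, the trace characterization of bands/gaps, the $\SO(2,\bbR)$-conjugacy and its uniqueness, continuity of the Lyapunov exponent, and the IDS band-mass identity — are standard facts of continuum periodic spectral theory, so the rest of the proof is a faithful transcription of the discrete argument.
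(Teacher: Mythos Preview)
Your approach is essentially the same as the paper's: transcribe the proof of Theorem~\ref{t:thinspecB} into the continuum setting, replacing $T$ by $B$ and discrete Floquet theory by continuum Floquet theory, with the IDS-derivative formula \eqref{eq:dkdEviaconjugacy} as the one step requiring outside input. You even correctly flag that step as the delicate one.

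There is, however, one genuine subtlety you gloss over. You write the continuum analogue of \eqref{eq:dkdEviaconjugacy} as a \emph{sum} $\tfrac{1}{\text{period}}\sum_j \|M(\cyclic^j \bm y^\aggregate,\energy)\|_2^2$. In the continuum the correct formula is an \emph{integral} over the continuous shift parameter, $\partial_\energy \ids \asymp \tfrac{1}{L}\int_0^L \|M(\cdot,\energy)\|_2^2$ (see \cite[Lemma~2.1]{DamFilLuk2017JST} or \cite[eq.~(17)]{Avila2015JAMS}). Your conjugacy argument only produces a large lower bound on $\|M\|$ at the \emph{two} block-boundary points $x=(j-1)aptu$ and $x=japtu$, which is a set of measure zero and contributes nothing directly to the integral. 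To pass from these pointwise bounds to a lower bound on the integral one must propagate the estimate to a set of positive length; the paper points out that this is done via Sobolev-type inequalities, already carried out in \cite{DamFilLuk2017JST}. With that ingredient added, your proof goes through exactly as written and matches the paper's.
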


\begin{theorem} \label{t:thinspecBcontinuum}
    Assume $S \subseteq \bbR$ is discrete, $Y \subseteq L^2([0,a))$ is perfect,  $Z \subseteq \bbR^\times $ is compact, and that $ \lambda Y$ is \ample\ with exceptional set $S$ for every $\lambda  \in Z$. 
    For any $\bm{x} \in \per(\bbZ,Y)$ of minimal period $p$, any $\varepsilon>0$, and any compact $K \subseteq \bbR \setminus S$, there exist constants $N_0=N_0(\bm{x}, K, \varepsilon,Z)$ and $c_0 = c_0(\bm{x}, K, \varepsilon,Z)>0$ such that for any $N \geq N_0$, there exists $\bm{y} \in \per(\bbZ,Y)$ of period $Np$ such that 
    \begin{equation}
        d(\bm x, \bm y) < \varepsilon,
        \quad 
        \Leb(K \cap \spectrum (H_{\lambda \bm y})) \leq e^{-c_0 N}
    \end{equation}
    for all $\lambda \in Z$.
\end{theorem}

The proofs of these theorems run identically to those of  Theorems~\ref{t:thinspecA} and \ref{t:thinspecB}.
The main difference is that one needs to replace \eqref{eq:dkdEviaconjugacy} with the corresponding fact about the IDS of a continuum periodic operator, which can be found in \cite[Lemma~2.1]{DamFilLuk2017JST} (see also \cite[eq.\ (17)]{Avila2015JAMS}).
Since the average in question is continuous rather than discrete, one needs to be able to perturb the cyclic permutation while preserving the bounds. This can be achieved with Sobolev-type inequalities and was already addressed in \cite{DamFilLuk2017JST}.
With these approximation results in hand, the proofs of Theorems~\ref{t:richToZHDcontinuum} and \ref{t:richToZHDcouplingcontinuum} proceed in the same manner as the proofs of Theorems~\ref{t:richToZHD} and \ref{t:richToZHDcoupling}.

\bibliographystyle{abbrv}

\bibliography{REUbib}

\end{document}